\allowdisplaybreaks \numberwithin{equation}{section}
\numberwithin{equation}{section}
\newtheorem{theorem}{Theorem}[section]
\newtheorem{lemma}{Lemma}[section]
\newtheorem{proposition}{Proposition}[section]
\newtheorem{remark}{Remark}[section]
\newtheorem{definition}{Definition}[section]
\begin{document}

\title[Global existence of weak solutions to the Navier-Stokes equations]{Global existence of weak solutions to the compressible Navier-Stokes equations with temperature-depending viscosity coefficients}

\author{Guodong Wang, Bijun Zuo}

\address{Institute for Advanced Study in Mathematics, Harbin Institute of Technology, Harbin 150001, P.R. China}
\email{wangguodong@hit.edu.cn}
\address{College of Mathematical Sciences, Harbin Engineering University, Harbin {\rm150001}, PR China}
\email{bjzuo@amss.ac.cn}

 %\keywords{}

%\thanks{}

\begin{abstract}
This paper is devoted to the global existence of weak solutions to the three-dimensional compressible Navier-Stokes equations with heat-conducting effects in a bounded domain.
The viscosity and the heat conductivity coefficients are assumed to be functions of the temperature, and the shear viscosity coefficient may vanish as the temperature goes to zero.
The proof is to apply Galerkin method to a suitable approximate system with several parameters and obtain uniform estimates for the approximate solutions. The key ingredient in obtaining the required estimates is to apply De Giorgi's iteration to the modified temperature equation, from which we can get a lower bound for the temperature not depending on the artificial viscosity coefficient introduced in the modified momentum equation, which makes the compactness argument available as the artificial viscous term vanishes.
\end{abstract}

\maketitle
\section{Introduction and Main Result}
\setcounter{equation}{0}
In this paper, we study the global existence of weak solutions to the following three-dimensional compressible Navier-Stokes equations with temperature effects:
\begin{equation}\label{1.1}
\begin{cases}
\partial_t\varrho+{\rm div}(\varrho\mathbf{u})=0,\\
\partial_t(\varrho\mathbf{u})+{\rm div}(\varrho\mathbf{u}\otimes\mathbf{u})+\nabla p={\rm div}\mathbb{S},\\
\partial_t(\varrho\vartheta)+{\rm div}(\varrho\mathbf{u}\vartheta)+{\rm div}\mathbf{q}=\mathbb{S}:\nabla\mathbf{u}-\vartheta p_{\vartheta}(\varrho){\rm div}\mathbf{u}.
\end{cases}
\end{equation}
Here $\varrho=\varrho(t,x)$ is the density of the fluid, $\mathbf{u}=\mathbf{u}(t,x)$ is the velocity field, $\vartheta=\vartheta(t,x)$ is the temperature, $p=p(t,x)$ is the pressure determined by the constitutive equation
\begin{equation}\label{1.2}
p=p(\varrho,\vartheta)=p_{e}(\varrho)+\vartheta p_{\vartheta}(\varrho),
\end{equation}
with $p_{e}$ being the elastic pressure and $\vartheta p_{\vartheta}$ being the thermal pressure,
$\mathbf{q}$ denotes the heat flux of the fluid satisfying the Fourier's law
\begin{equation*}
\mathbf{q}=-\kappa(\vartheta)\nabla\vartheta,
\end{equation*}
with $\kappa=\kappa(\vartheta)>0$ being the heat conductivity coefficient depending on the fluid temperature,
and $\mathbb{S}$ denotes the viscous stress tensor 
\begin{equation*}
\mathbb{S}=\mu(\nabla\mathbf{u}+\nabla^T\mathbf{u})+\lambda{\rm div}\mathbf{u}\mathbb{I},
\end{equation*}
where $\mu=\mu(\vartheta)$ and $\lambda=\lambda(\vartheta)$ are the shear and bulk viscosity coefficients respectively depending on the temperature.    We assume that
\begin{equation}\label{1.002}
\mu(\vartheta)\geq 0,\quad\lambda(\vartheta)+\frac{2}{3}\mu(\vartheta)\geq0.
\end{equation}
Note that $\mu(\vartheta)$ is allowed to degenerate in the region of absolutely zero temperature. 

We will consider system \eqref{1.1} in a smooth bounded domain $\Omega\subset \mathbb{R}^3$, and impose the following initial and boundary conditions
\begin{equation}\label{1.4}
(\varrho,\varrho\mathbf{u},\vartheta)(0,x)=(\varrho_0,\mathbf{m}_0, \vartheta_0)(x) \quad{\rm in}\,\,\Omega.
\end{equation}
\begin{equation}\label{1.3}
\mathbf{u}(t,x)=0,\quad\nabla\vartheta(t,x)\cdot\mathbf{n}(x)=0\quad{\rm on} \,\,{[0,T]\times\partial\Omega},
\end{equation}
where $\mathbf{n}(x)$ is the unit outward normal vector to the boundary at
$x\in\partial\Omega$.

There have been a huge number of works in the literature concerning the global existence of solutions to the compressible Navier-Stokes equations. In particular, results about the one-dimensional case are rather satisfactory, see \cite{{Hoff 1987}, {Hoff-Smoller}, {Kanel}, {Kazhikhov-Shelukhin}, {Liu-Smoller}} and the references therein. For the multi-dimensional case, Matsumura-Nishida \cite{Matsumura 1980} first showed
the global existence of classical solutions with small initial data, and then Hoff \cite{Hoff 1997} extended the result \cite{Matsumura 1980}
to the discontinuous initial data case. See also \cite{{Ducomet 2010}, {Ducomet 2011}, {Hoff 1992}} for the spherically symmetric case. For the large initial data which may contain vacuum, the global
existence of weak solutions was first proved by Lions \cite{Lions 1998 Compressible models} for the isentropic case, i.e., $p=A\varrho^\gamma$ with $\gamma\geq3/2$ in two dimensions and $\gamma\geq9/5$ in three dimensions. This result was extended to $\gamma>1$ for the spherically symmetric case by Jiang-Zhang \cite{Jiang Song and Zhang Ping} and $\gamma>3/2$ for the general three-dimensional case by Feireisl-Novotn\'{y}-Petzeltov\'{a} \cite{Feireisl 2001}.

In all the works mentioned above, the viscosity coefficients are assumed to be positive constants, which plays an essential role in obtaining fine estimates for the gradient of the velocity field. The global existence becomes more challenging if the viscosity coefficients are effective functions of the density due to the possible occurrence of vacuum.
The one-dimensional case and the multi-dimensional case with spherically symmetric data were studied in \cite{{Amosov}, {Guo-Jiu-Xin}, {Jiang 1998}, {Jiang-Xin-Zhang 2005}, {Liu-Xin-Yang 1998}, {Matsumura-Yanagi}, {Yang 2000}} and the references therein. For the multi-dimensional case with general data, Bresch-Desjardins \cite{B-D 2003} established the global weak solutions, where a new entropy inequality (BD entropy) was obtained to yield more regularity for the density. Then, based on the compactness arguments in Mellet-Vasseur \cite{Mellet-Vasseur}, Li-Xin \cite{Li and Xin} and Vasseur-Yu \cite{Vasseur and Yu} proved the global existence of weak solutions to the compressible Navier-Stokes equations with the viscous Saint-Venant system for the shallow water contained. We also mention a very remarkable result by Vaigant-Kazhikhov \cite{Veigant-Kazhikhov} where the global well-posedness of classical solutions to a potential barotropic compressible model was obtained if the initial density is uniformly away from vacuum and the viscosity coefficients satisfy
\begin{equation*}
\mu=\text{constant} >0, \quad \lambda(\varrho)=\varrho^\beta, \,\beta>3.
\end{equation*}
This result was further developed by Jiu-Wang-Xin \cite{Jiu-Wang-Xin JDE}-\cite{Jiu-Wang-Xin Physica D}.

When the heat-conducting effects are considered and the viscosity coefficients are functions of the temperature,
Feireisl \cite{Feireisl On the motion 2004} proved the global existence of ``variational" solutions to the compressible Navier-Stokes equations
in a bounded domain $\Omega\subset\mathbb{R}^N$, $N=2, 3$, where the viscosity coefficients satisfy
\begin{equation*}
\mu(\vartheta)\geq\underline{\mu}>0, \quad \lambda(\vartheta)+\frac{2}{N}\mu(\vartheta)\geq 0,
\end{equation*}
for some positive constant $\underline{\mu}>0$. The concept of ``variational" solutions was first proposed by Feireisl \cite{Feireisl Dynamics 2004}, where the global existence of ``variational" solutions was proved for the constant viscosity coefficients.

In the present paper, we consider the global existence of weak solutions to the initial-boundary problem \eqref{1.1},  \eqref{1.4}  and \eqref{1.3}, with the viscosity coefficients $\mu(\vartheta)$ and $\lambda(\vartheta)$ satisfying \eqref{1.002}. Note that the shear viscosity coefficient $\mu(\vartheta)$ is degenerate and may vanish in the region of absolutely zero temperature. This assumption is based on the fact that zero viscosity may occur when the temperature is very low, as for some superfluids mentioned in \cite{Ba}.

As the shear viscosity coefficient vanishes, the parabolicity of the momentum equation $\eqref{1.1}_2$ will degenerate. To overcome this difficulty, we add an artificial viscosity term $\eta\Delta\mathbf{u}$ ($\eta>0$) in the momentum equation, which makes it possible to apply the Galerkin method to prove the global solvability of the approximate system. Besides, such an artificial viscosity term  plays an essential role in obtaining the $ L^2(0,T; H_0^1(\Omega))$  estimate for $\mathbf u$ in Sections 2 and 3. However, the artificial term also brings two key problems: first, suitable estimates on $\nabla\mathbf{u}$ independent of the parameter $\eta>0$ are needed; second, possible density oscillation as the artificial viscosity term vanishes
requires extra attention.

To obtain suitable estimates on $\nabla\mathbf{u}$ independent of $\eta>0$, it suffices to get a uniform lower bound for the viscosity coefficient $\mu(\vartheta)$ with respect to $\eta>0$, which can be achieved by proving that the temperature is bounded away from zero by a positive constant. The required positive lower bound for the temperature is obtained in Section 4 by De Giorgi's iteration, a useful method first established by De Giorgi \cite{De} in obtaining H\"{o}lder regularity of solutions to elliptic equation with discontinuous coefficients, and then applied  to the study of the compressible Navier-Stokes equations by Mellet and Vasseur \cite{Vasseur temperature}. It is worthy mentioning that the proof of \cite{Vasseur temperature} relies heavily on the following thermal energy inequality
\begin{equation*}
\begin{split}
&\int_\Omega \varrho\phi(\vartheta)(t,x)dx
-\int_s^t\int_\Omega 2\mu\phi'(\vartheta)|D(\mathbf{u})|^2dxd\tau
-\int_s^t\int_\Omega\lambda\phi'(\vartheta)|{\rm div}\mathbf{u}|^2dxd\tau\\
&+\int_s^t\int_\Omega\kappa\phi''(\vartheta)|\nabla\vartheta|^2dxd\tau
\leq -R\int_s^t\int_\Omega \varrho\vartheta\phi'(\vartheta){\rm div}\mathbf{u} dxd\tau
+ \int_\Omega \varrho\phi(\vartheta)(s,x)dx,
\end{split}
\end{equation*}
for $\phi(\vartheta)=\left[\ln \left(\frac{C}{\vartheta+\varepsilon}\right)\right]_{+}$,
which will be replaced by the modified temperature inequality \eqref{3-11} in our proof.

The way to deal with possible density oscillation is mostly based on the weak continuity property of the effective viscous pressure
\begin{equation*}
P_{eff}=p-(\lambda+2\mu){\rm div}\mathbf{u},
\end{equation*}
which was first introduced by Lions \cite{Lions 1998 Compressible models} for the barotropic case with $\mu$ and $\lambda$ being positive constants, and then generalized by Feireisl \cite{Feireisl On the motion 2004} to the case that $\mu$ and $\lambda$ depend on the temperature with $\mu(\vartheta)\geq \underline{\mu}>0$ for some positive constant $\underline{\mu}$.
The assumption in \cite{Feireisl On the motion 2004} that $\mu(\vartheta)$ has a positive lower bound is essential to ensure the weak continuity property of $P_{eff}$, and this may fail for our degenerate case \eqref{1.002}. However, if we add an artificial viscosity term $\eta\Delta\mathbf{u}$ in the momentum equation, then the term $\mu(\vartheta)+\eta$ can be viewed as a new shear viscosity coefficient with a positive lower bound $\eta$, and therefore the weak continuity property of $P_{eff}$ still holds. In Section 4 ($\eta\to 0$),  we will prove that the temperature is bounded away from zero by De Giorgi's method, which combined with the strictly increasing assumption of the shear viscosity coefficient with respect to the temperature in Theorem \ref{1.2.} implies that there exists a positive constant $\underline{\underline{\mu}}>0$ independent of $\eta$ such that
\begin{equation}\label{1-00}
\mu(\vartheta)\geq\underline{\underline{\mu}}>0.
\end{equation}
Based on \cite{Feireisl On the motion 2004}, the bound \eqref{1-00} implies the strong convergence of the density as the artificial viscosity term $\eta\Delta\mathbf{u}$ vanishes.

Another difficulty in establishing the global existence of weak solutions lies in the temperature concentration, which as in \cite{Feireisl Dynamics 2004} can be tackled by the renormalization of the temperature equation $\eqref{1.1}_3$. Concretely, multiplying $\eqref{1.1}_3$ by $h(\vartheta)$ for some suitable function $h$, we have
\begin{equation*}
\begin{split}
\partial_t(\varrho H(\vartheta))+{\rm div}(\varrho\mathbf{u}H(\vartheta))-\Delta\mathcal{K}_h(\vartheta)
=h(\vartheta)\mathbb{S}:\nabla\mathbf{u}-h'(\vartheta)\kappa(\vartheta)|\nabla\vartheta|^2
-h(\vartheta)\vartheta p_\vartheta {\rm div}\mathbf{u},
\end{split}
\end{equation*}
where
\begin{equation*}
H(\vartheta)=\int_0^\vartheta h(z) dz,\quad\quad \mathcal{K}_h(\vartheta)=\int_0^\vartheta \kappa(z)h(z) dz.
\end{equation*}
The idea of renormalization was first proposed by DiPerna and Lions in \cite{DiPerna and Lions}, where they replaced the continuity equation $\eqref{1.1}_1$ by a renormalized version
\begin{equation*}
\partial_t b(\varrho)+{\rm div}(b(\varrho)\mathbf{u})+(b'(\varrho)\varrho-b(\varrho)){\rm div}\mathbf{u}=0,
\end{equation*}
for suitable functions $b=b(\varrho)$, and then such a method was applied by Lions
\cite{Lions 1996 Incompressible models} and Feireisl \cite{Feireisl Dynamics 2004} to overcome the temperature concentration.

The weak solutions to the initial-boundary value problem \eqref{1.1}, \eqref{1.4} and \eqref{1.3} are defined as follows:
\begin{definition}\label{1.1.}
We call $(\varrho, \mathbf{u}, \vartheta)$ a weak solution to the initial-boundary value problem \eqref{1.1},  \eqref{1.4} and \eqref{1.3} if
\begin{enumerate}[(i)]

\item the density $\varrho\geq0$ satisfies
\begin{equation*}
\varrho\in L^\infty(0,T; L^\gamma(\Omega))\cap C([0,T];L^1(\Omega)),
\end{equation*}
the velocity $\mathbf{u}$ belongs to $L^2(0,T; H^1_0(\Omega))$, and $(\varrho, \mathbf{u})$ solves the continuity equation $\eqref{1.1}_1$ in the sense of distributions, that is, for any $\Phi\in C^\infty_c((0,T)\times\Omega)$
\begin{equation*}
\int_0^T\int_\Omega \varrho \partial_t\Phi dxdt+\int_0^T\int_\Omega \varrho\mathbf{u}\cdot\nabla \Phi dxdt=0;
\end{equation*}

\item the momentum equation $\eqref{1.1}_2$ holds in $\mathcal{D}'((0,T)\times\Omega)$, that means,
\begin{equation*}
\begin{split}
&\int_0^T\int_\Omega \varrho\mathbf{u}\cdot \partial_t\Phi + \varrho\mathbf{u}\otimes\mathbf{u}:\nabla\Phi + p\,{\rm div}\Phi dxdt
=\int_0^T\int_\Omega \mathbb{S}:\nabla \Phi  dxdt,
\end{split}
\end{equation*}
for any $\Phi\in C^\infty_c((0,T)\times\Omega)$. Moreover, $\varrho\mathbf{u}\in C([0,T];L_{weak}^{\frac{2\gamma}{\gamma+1}}(\Omega))$ satisfies the initial condition \eqref{1.4};

\item the temperature $\vartheta\geq 0$ satisfies
\begin{equation*}
\vartheta \in L^2(0,T; H^1(\Omega)),\quad \varrho\vartheta\in L^\infty(0,T; L^1(\Omega)),
\end{equation*}
and $\vartheta(t,\cdot)\rightarrow \vartheta_{0}$ in $\mathcal{D}'(\Omega)$, as $t\rightarrow 0^+$,
that is, for any $\chi\in C_c^\infty(\Omega)$, it holds
\begin{equation*}
\lim_{t\rightarrow 0^+}\int_{\Omega}\vartheta(t,x)\chi(x) dx=\int_{\Omega}\vartheta_0(x)\chi(x)dx.
\end{equation*}
Furthermore, the following temperature inequality holds
\begin{equation*}
\begin{split}
&\int_0^T\int_\Omega \varrho\vartheta\partial_t\varphi dxdt+\int_0^T\int_\Omega \left(\varrho\mathbf{u}\vartheta\cdot\nabla\varphi
+\mathcal{K}(\vartheta)\Delta\varphi\right) dxdt\\
&\leq \int_0^T\int_\Omega \left(\vartheta p_\vartheta {\rm div}\mathbf{u}-\mathbb{S}:\nabla\mathbf{u}\right)\varphi dxdt
-\int_\Omega \varrho_0\vartheta_0\varphi(0)dx,
\end{split}
\end{equation*}
for any $\varphi\in C_c^\infty([0,T]\times\Omega)$ satisfying
\begin{equation*}
\varphi\geq 0,\,\,\varphi(T,\cdot)=0,\,\,\nabla\varphi\cdot \mathbf{n}|_{\partial\Omega}=0,
\end{equation*}
where
\begin{equation*}
\varrho\mathbb{S}=\varrho\left[\mu(\vartheta)(\nabla\mathbf{u}+\nabla^T\mathbf{u})+\lambda(\vartheta){\rm div}\mathbf{u}\mathbb{I}\right],
\end{equation*}
and
\begin{equation*}
\mathcal{K}(\vartheta)=\int_0^\vartheta \kappa(z) dz;
\end{equation*}

\item the energy inequality holds, that is, for a.e. $t\in(0,T)$
\begin{equation*}
E[\varrho, \mathbf{u}, \vartheta](t)\leq E[\varrho, \mathbf{u}, \vartheta](0),
\end{equation*}
where
\begin{equation*}
E[\varrho, \mathbf{u}, \vartheta](t)=\int_\Omega \varrho\left(\frac{1}{2}|\mathbf{u}|^2+P_e(\varrho)+\vartheta\right)(t) dx,
\end{equation*}
and
\begin{equation*}
E[\varrho, \mathbf{u}, \vartheta](0)=\int_\Omega \left(\frac{1}{2}\frac{|\mathbf{m}_0|^2}{\varrho_0}+\varrho_0P_e(\varrho_0)+\varrho_0\vartheta_0\right)dx,
\end{equation*}
with
\begin{equation*}
P_e(\varrho)=\int_1^\varrho\frac{p_e(z)}{z^2}dz.
\end{equation*}

\end{enumerate}
\end{definition}

\begin{remark}\label{1.03.}
The weak solutions in Definition \ref{1.1.} are similar to the ``variational" solutions in \cite{Feireisl Dynamics 2004}.
\end{remark}

\begin{remark}\label{1.4.}
As pointed out in \cite{Feireisl on the Navier-Stokes 2006}, the reason
for introducing the function $\mathcal{K}(\vartheta)=\int_0^\vartheta \kappa(z) dz$
with $\nabla\mathcal{K}(\vartheta)=\kappa(\vartheta)\nabla\vartheta=-\mathbf{q}$
is that we are unable to deduce $\kappa(\vartheta)\nabla\vartheta$ is
locally integrable by a priori estimates. However, we can deduce $\mathcal{K}(\vartheta)\in L^1((0,T)\times\Omega)$ by
constructing proper approximate equations and requiring suitable growth restrictions on $\kappa(\vartheta)$.
\end{remark}

\begin{remark}\label{1.3.}
As will be shown later, the bounds on the velocity that we can obtain from a priori estimates  fail  to ensure the convergence of the term $\mathbb{S}:\nabla\mathbf{u}$ in the sense of distributions. Therefore, as in \cite{Feireisl Dynamics 2004, Feireisl On the motion 2004}, we replaced the temperature equation (\ref{1.1}d) by the following two inequalities in Definition \ref{1.1.}
\begin{equation}\label{1.19}
\partial_t(\varrho\vartheta)+{\rm div}(\varrho\mathbf{u}\vartheta)-\Delta\mathcal{K}(\vartheta)
\geq\mathbb{S}:\nabla\mathbf{u}-\vartheta p_\vartheta(\varrho){\rm div}\mathbf{u},
\end{equation}
and
\begin{equation}\label{1.20}
E[\varrho, \mathbf{u}, \vartheta](t)\leq E[\varrho, \mathbf{u}, \vartheta](0).
\end{equation}
\end{remark}

Before giving our main result, we need to state our assumptions on the pressure.
Throughout this paper, we assume that the pressure $p$ satisfies one of the following two conditions
\begin{itemize}
\item[(1)] $p$ has the form
\begin{equation}\label{1.2900}
p(\varrho, \vartheta)= p_e(\varrho)+\vartheta p_\vartheta(\varrho),
\end{equation}
where $p_e$ and $p_\vartheta$ satisfy
\begin{equation}\label{1.30}
\begin{cases}
p_e\in C[0,\infty)\cap C^1(0,\infty),\\
p_e(0)=0,\,p_e'(\varrho)\geq a_1\varrho^{\gamma-1}-b \,\,&{\rm for\,\,all}\,\,\varrho>0,\\
p_e(\varrho)\leq a_2\varrho^{\gamma}+b \,\,&{\rm for\,\,all}\,\,\varrho\geq0,
\end{cases}
\end{equation}

\begin{equation}\label{1.31}
\begin{cases}
p_\vartheta\in C[0,\infty)\cap C^1(0,\infty),\\
p_\vartheta(0)=0,\,p_\vartheta'(\varrho)\geq 0\,\,&{\rm for\,\,all}\,\,\varrho>0,\\
p_\vartheta(\varrho)\leq c(1+\varrho^{\gamma/3})\,\,&{\rm for\,\,all}\,\,\varrho\geq0,
\end{cases}
\end{equation}
where $\gamma>\frac{3}{2}$ and $a_1$, $a_2$, $b$, $c$ are positive constants;
\item[(2)]$p$ has the form
\begin{equation}\label{1.310}
p(\varrho, \vartheta)= p_e(\varrho)+R\varrho\vartheta
\end{equation}
where $p_e(\varrho)$ satisfies \eqref{1.30} with $a_1,a_2,b$ being positive constants, $\gamma>3$ and $R$ is a positive constant.
\end{itemize}
It can be easily checked that  the above two conditions are independent.

Now we are ready to state our main result.
\begin{theorem}\label{1.2.}
Let $\Omega\subset \mathbb{R}^3$ be a smooth bounded domain. Assume that

\begin{enumerate}[(i)]
\item the pressure $p$ satisfies \eqref{1.2900} or \eqref{1.310};

\item the heat conductivity coefficient $\kappa(\vartheta)\in C^1[0,\infty)$ and satisfies
\begin{equation}\label{1.32}
\underline{\kappa}(1+\vartheta^2)\leq\kappa(\vartheta)\leq\overline{\kappa}(1+\vartheta^2),
\end{equation}
for positive constants $\underline{\kappa}$ and $\overline{\kappa}$;

\item the viscosity coefficients $\mu$ and $\lambda$ are globally Lipschitz continuous functions on $[0,\infty)$ and $\mu$ is strictly increasing, moreover, $\mu$ and $\lambda$ satisfy
\begin{equation}\label{1.33}
2\mu(\vartheta)+3\lambda(\vartheta)\geq\nu(\vartheta)>0\quad{\rm for \,\,all \,\,}\vartheta,
\end{equation}
\begin{equation}\label{1.330}
\nu(\vartheta)\geq C\vartheta\quad\quad\quad\quad\quad\quad{\rm for \,\,small \,\,}\vartheta,
\end{equation}
for some function $\nu:[0,+\infty)\to\mathbb R$ and positive constant $C$;

\item the initial data satisfy
\begin{equation}\label{1.34}
\begin{cases}
\varrho_0\in L^\gamma(\Omega), \,\,\varrho_0\geq \underline{\varrho}>0  \,\,&{\rm on}\,\,\Omega,\\
\vartheta_0\in L^1(\Omega),\,\,\vartheta_0\geq\underline{\vartheta}>0 \,\,& {\rm on}\,\,\Omega,\\
\
\frac{|\mathbf{m}_0|^2}{\varrho_0}\in L^1(\Omega)
\end{cases}
\end{equation}
\end{enumerate}
for positive constants $\underline{\varrho}$ and $\underline{\vartheta}$.
Then for any given $T>0$, the initial-boundary value problem \eqref{1.1}, \eqref{1.4} and \eqref{1.3} admits a global weak solution $(\varrho,\mathbf{u},\vartheta)$ in the sense of Definition \ref{1.1.}.
\end{theorem}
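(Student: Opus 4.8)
The plan is to realize $(\varrho,\mathbf u,\vartheta)$ as the limit of solutions of a multi-parameter approximate system, built in the spirit of Feireisl--Novotn\'y--Petzeltov\'a \cite{Feireisl 2001} and Feireisl \cite{Feireisl Dynamics 2004, Feireisl On the motion 2004}, and then to send the parameters to their limits one after another. For fixed parameters $\eta,\ep,\delta>0$ and a Galerkin dimension $N$, I would consider the momentum equation with the artificial viscosity $\eta\Delta\mathbf u$ added and an artificial pressure $\delta\varrho^\beta$ ($\beta$ large) included, projected onto the span $X_N$ of the first $N$ eigenfunctions of the Dirichlet Laplacian; the continuity equation regularized to $\partial_t\varrho+{\rm div}(\varrho\mathbf u)=\ep\Delta\varrho$ with $\nabla\varrho\cdot\mathbf n|_{\partial\Om}=0$, together with the corresponding $\ep$-correction in the momentum balance; and a regularized, renormalized version of the temperature equation (truncating the source $\mathbb S:\nabla\mathbf u$ and keeping $\vartheta$ strictly positive), whose quasilinear parabolic structure is governed by the functions $\mathcal K_h$. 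For fixed $(\eta,\ep,\delta,N)$ the density is obtained from the parabolic continuity equation as a functional of $\mathbf u$, the temperature from the parabolic temperature problem, and $\mathbf u\in C^1([0,T];X_N)$ from a Schauder fixed-point argument; letting $N\to\infty$ with the energy estimate yields a solution of the $(\eta,\ep,\delta)$-approximate system. This is Section 2; Sections 2--3 then carry out $\ep\to0$ and $\delta\to0$, and Section 4 does $\eta\to0$.

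The heart of the argument is a family of a priori bounds \emph{uniform in $\eta$}. The total energy balance gives $\sqrt\varrho\,\mathbf u$ bounded in $L^\infty(0,T;L^2(\Om))$, $\varrho$ in $L^\infty(0,T;L^\gamma(\Om))$, $\varrho\vartheta$ in $L^\infty(0,T;L^1(\Om))$ and $\sqrt\eta\,\nabla\mathbf u$ in $L^2((0,T)\times\Om)$; testing the temperature equation with $-\vartheta^{-1}$ and with $1$, and using the growth assumption \eqref{1.32} on $\kappa$, bounds $\nabla\ln\vartheta$ and $\nabla\vartheta^{3/2}$ in $L^2((0,T)\times\Om)$, hence $\vartheta\in L^2(0,T;H^1(\Om))$ and $\mathcal K(\vartheta)\in L^1((0,T)\times\Om)$. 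The genuinely new ingredient is a lower bound $\vartheta\ge\underline{\underline\vartheta}>0$ with $\underline{\underline\vartheta}$ independent of $\eta,\ep,\delta$: one inserts the renormalization $\phi(\vartheta)=\left[\ln\left(\frac{C}{\vartheta+\ep}\right)\right]_+$ (with $\ep$ eventually sent to zero) into the modified temperature inequality \eqref{3-11} and runs De Giorgi's iteration exactly as Mellet--Vasseur \cite{Vasseur temperature} do with their thermal energy inequality, absorbing the bad term $R\int\!\!\int\varrho\vartheta\phi'(\vartheta){\rm div}\mathbf u$ by the $\eta$-uniform energy bounds and Young's inequality. Since $\mu$ is Lipschitz and strictly increasing, this yields \eqref{1-00}, i.e.\ $\mu(\vartheta)\ge\underline{\underline\mu}>0$ uniformly in $\eta$; combined with \eqref{1.33} the dissipation satisfies $\int\!\!\int(\mathbb S:\nabla\mathbf u+\eta|\nabla\mathbf u|^2)\gtrsim(\underline{\underline\mu}+\eta)\|\nabla\mathbf u\|_{L^2((0,T)\times\Om)}^2-(\text{controlled terms})$, so $\mathbf u$ is bounded in $L^2(0,T;H^1_0(\Om))$ independently of $\eta$.

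With these estimates one passes to the limit in the order $N\to\infty$, $\ep\to0$, $\delta\to0$, $\eta\to0$. At each stage density compactness rests on the weak continuity of the effective viscous pressure $P_{eff}=p+\delta\varrho^\beta-(\la(\vartheta)+2\mu(\vartheta)+\eta)\,{\rm div}\mathbf u$ --- valid because the effective shear coefficient $\mu(\vartheta)+\eta$, resp.\ $\mu(\vartheta)\ge\underline{\underline\mu}$ in the last step, is bounded below --- together with the renormalized continuity equation and the oscillation-defect-measure estimate, which upgrade weak to strong convergence of $\varrho$; here I would follow \cite{Feireisl On the motion 2004}. Temperature concentration is ruled out by renormalizing $\eqref{1.1}_3$ with a bounded, concave, increasing $h$ as in \cite{Feireisl Dynamics 2004}, which, with $\vartheta\in L^2(0,T;H^1)$ and the $\kappa$-growth bound, gives equi-integrability of $\varrho H(\vartheta)$ and convergence of $\mathcal K_h(\vartheta)$. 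In the final limit $\eta\to0$ the term $\eta\Delta\mathbf u\to0$ in $\mathcal D'$ (since $\sqrt\eta\,\nabla\mathbf u$ is $L^2$-bounded) while $\mathbf u$ converges weakly in $L^2(0,T;H^1_0)$; the temperature equation survives only as the inequalities \eqref{1.19}--\eqref{1.20} of Remark \ref{1.3.}, the correct sign coming from weak lower semicontinuity of the dissipation, and the energy inequality of Definition \ref{1.1.}(iv) is stable under all the limits.

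\textbf{Main obstacle.} I expect the delicate point to be the De Giorgi step: it must be arranged so that \eqref{3-11} has precisely the structure needed for the iteration, so that the coefficient $\nu(\vartheta)$ of \eqref{1.33}--\eqref{1.330} enters the level-set energy estimates with the right sign and the right behaviour near $\vartheta=0$, and --- most importantly --- so that \emph{every} constant in the iteration, hence $\underline{\underline\vartheta}$ itself, depends only on the data, $T$ and $\Om$ and never on $\eta,\ep,\delta$; in particular the control of $R\varrho\vartheta\phi'(\vartheta){\rm div}\mathbf u$ may not invoke the $\sqrt\eta\,\nabla\mathbf u$ bound. A secondary difficulty is the uniform-in-$\eta$ strong compactness of $\varrho$ in the last limit, where one must check that the effective-viscous-pressure identity and the oscillation-defect-measure estimate pass to the limit with $\eta$-independent constants.
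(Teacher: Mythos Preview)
Your proposal is correct and follows essentially the same architecture as the paper --- a three-parameter approximation with artificial viscosity $\eta\Delta\mathbf u$, the De Giorgi iteration on the renormalized temperature inequality to force $\vartheta\ge\underline{\underline\vartheta}>0$ uniformly in $\eta$, and the effective-viscous-pressure route to density compactness --- and you have correctly located the main obstacle.

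Two differences are worth noting. First, you take the limits in the order $\varepsilon\to0$, $\delta\to0$, $\eta\to0$, whereas the paper does $\varepsilon\to0$, $\eta\to0$, $\delta\to0$. The paper's order keeps the $(\delta+\varrho)$ and $\delta\vartheta^3$ structure of the renormalized inequality \eqref{3-11} intact through the De Giorgi step (so that \eqref{6.45} is immediate) and through the $\vartheta$-compactness argument via the Aubin--Lions variant (Lemma~\ref{7.1.} applied to $(\delta+\varrho)H(\vartheta)$). Your order is not wrong, but it would force you to check that enough of the renormalized inequality survives $\delta\to0$ to run the $\eta\to0$ compactness for $\vartheta$; the paper's ordering sidesteps this. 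Second, your De Giorgi sketch handles only the pressure case \eqref{1.310}, i.e.\ the term $R\varrho\vartheta\phi'(\vartheta)\,{\rm div}\mathbf u$, which is already covered by Mellet--Vasseur. The paper's genuinely new contribution is the general thermal pressure $\vartheta\,p_\vartheta(\varrho)$ under \eqref{1.31}: there the bad term is $\vartheta\,p_\vartheta(\varrho)\phi'(\vartheta)\,{\rm div}\mathbf u$, and it is absorbed by splitting off $\nu(\vartheta)|D(\mathbf u)|^2/(\vartheta+\omega)$ via Young's inequality and using precisely the hypothesis $\nu(\vartheta)\ge C\vartheta$ near zero to bound $\vartheta/\nu(\vartheta)$, leaving $|p_\vartheta(\varrho)|^2$ to be controlled by the $L^\infty_tL^\gamma_x$ bound on $\varrho$ after a delicate choice of H\"older exponents (Steps~2--3 of the proof of Proposition~\ref{6.1.}). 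That is where assumptions \eqref{1.330} and the growth $p_\vartheta(\varrho)\le c(1+\varrho^{\gamma/3})$ are actually used.
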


\begin{remark}\label{1}
Zero viscosity only occurs when the temperature is very low in superfluids as in \cite{Ba};
Otherwise, the viscosity of all fluids is positive by the second law of thermodynamics.
Thus our restrictions \eqref{1.33} and \eqref{1.330} in Theorem \ref{1.2.} are physical and reasonable.
\end{remark}

\begin{remark}\label{2}
The strictly increasing assumption of the shear viscosity coefficient $\mu$ in Theorem \ref{1.2.} is reasonable. In fact, gases are generally considered as compressible fluid,  and in most cases the shear viscosity of gases increases as the temperature increases; See \cite{Reid} for example.
\end{remark}

We give some comments on Theorem \ref{1.2.} by comparing it with two closely related results of the full compressible Navier-Stokes equations in \cite{Feireisl On the motion 2004} and \cite{Vasseur temperature}.
In \cite{Feireisl On the motion 2004}, Feireisl proved the global existence of the so-called ``variational" solutions where the viscosity coefficients are functions of the temperature satisfying
\begin{equation*}
\mu(\vartheta)\geq\underline{\mu}>0,\quad \kappa(\vartheta)+\frac{2}{3}\mu(\vartheta)\geq0,
\end{equation*}
and the pressure is given by \eqref{1.2900} with $p_e$ and $p_\vartheta$ satisfying \eqref{1.30} and \eqref{1.31}.
By contrast, Theorem \ref{1.2.}
does not require that the shear viscosity has a positive lower bound and allows another choice for the pressure (i.e., \eqref{1.30} and \eqref{1.310}),
and thus can be regarded as an improvement of Feireisl's result. The other related result is obtained by
Mellet and Vasseur \cite{Vasseur temperature}, where they used De Giorgi's method to show that the temperature is uniformly positive in any finite time interval provided that the initial temperature has a lower bound away from zero.
The assumptions on $\varrho, \mathbf{u}$ and $p$ in \cite{Vasseur temperature} are
\begin{equation}\label{1-1}
\varrho\in L^\infty(0,T;L^p(\Omega)),\quad{\rm for\,\,some\,\,}p>3,
\end{equation}
\begin{equation}\label{1-2}
\mathbf{u}\in L^2(0,T;H_0^1(\Omega)),
\end{equation}
\begin{equation}\label{1-3}
p=p(\varrho,\vartheta)=\tilde p_e(\varrho)+R\varrho\vartheta,
\end{equation}
where $\tilde p_e$ can be any function of $\varrho$.
These assumptions on solutions are independent of ours when we impose the first kind of conditions on $p$, that is, \eqref{1.2900},\eqref{1.30} and \eqref{1.31}.
When $p$ satisfies the second kind of conditions, that is, \eqref{1.30} and \eqref{1.310}, the assumptions \eqref{1-1}-\eqref{1-3} are weaker than ours, but it seems that they are not enough to get the global existence.

Our paper is organized as follows.
In Section 2, we construct a suitable approximate system \eqref{2-1}-\eqref{2-302} with three parameters $\varepsilon,\eta$ and $\delta$
and obtain its global solvability by means of a modified Galerkin method.
In Section 3, we let $\varepsilon\rightarrow 0$ for the approximate solutions constructed in Section 2.
In Section 4, we apply De Giorgi's iteration to obtain a positive lower bound for the temperature not depending on the parameter $\eta$ and then pass to the limit $\eta\rightarrow 0$.
In Section 5, we let $\delta\rightarrow 0$ to finish the proof of Theorem \ref{1.2.}.

\section{Construction of approximate solutions}
\setcounter{equation}{0}
First, we construct the following approximate system:
\begin{enumerate}[(i)]
\item continuity equation with vanishing viscosity:
\begin{equation}\label{2-1}
\partial_t\varrho+{\rm div}(\varrho\mathbf{u})=\varepsilon\Delta\varrho, \quad\varepsilon>0,
\end{equation}
\begin{equation}\label{2-101}
\nabla\varrho\cdot\mathbf{n}=0\quad{\rm on}\,\,\partial\Omega,
\end{equation}
\begin{equation}\label{2-102}
\varrho(0,\cdot)=\varrho_{0,\delta}\quad{\rm in}\,\,\Omega;
\end{equation}

\item momentum equation with artificial pressure and artificial viscosity:
\begin{equation}\label{2-2}
\partial_t(\varrho\mathbf{u})+{\rm div}(\varrho\mathbf{u}\otimes\mathbf{u})
+\nabla(p(\varrho,\vartheta)+\delta\varrho^\beta)+\varepsilon\nabla\mathbf{u}\nabla\varrho
=\eta\Delta\mathbf{u}+{\rm div}\mathbb{S},\quad\eta,\delta>0,
\end{equation}
\begin{equation}\label{2-201}
\mathbf{u}=0\quad{\rm on}\,\,\partial\Omega,
\end{equation}
\begin{equation}\label{2-202}
(\varrho\mathbf{u})(0,\cdot)=\mathbf{m}_{0,\delta}\quad{\rm in}\,\,\Omega;
\end{equation}

\item regularized temperature equation:
\begin{equation}\label{2-3}
\partial_t((\delta+\varrho)\vartheta)+{\rm div}(\varrho\mathbf{u}\vartheta)-\Delta \mathcal{K}(\vartheta)+\delta \vartheta^3
=(1-\delta)\mathbb{S}:\nabla\mathbf{u}-\vartheta p_\vartheta(\varrho){\rm div}\mathbf{u},
\end{equation}
\begin{equation}\label{2-301}
\nabla\vartheta\cdot\mathbf{n}=0\quad{\rm on}\,\,\partial\Omega,
\end{equation}
\begin{equation}\label{2-302}
\vartheta(0,\cdot)=\vartheta_{0,\delta}\quad{\rm in}\,\,\Omega.
\end{equation}
\end{enumerate}

Note that the construction of the above approximate system is motivated by but different from
\cite{Feireisl 2001}-\cite{Feireisl Singular}.
Moreover, the modified initial data are required to satisfy the following conditions:
\begin{equation}\label{2-4}
\begin{cases}
\varrho_{0,\delta}\in C^{2+\nu}(\bar{\Omega}),\nu>0, \quad \nabla\varrho_{0,\delta}\cdot\mathbf{n}|_{\partial\Omega}=0,\quad 0<\delta\leq\varrho_{0,\delta}\leq \delta^{-1/{2\beta}};\\
\varrho_{0,\delta}\rightarrow\varrho_{0}\,\,{\rm in}\,\,L^\gamma(\Omega),\quad
|\{x\in\Omega\,|\,\varrho_{0,\delta}<\varrho_0\}|\rightarrow 0,\,\,{\rm as}\,\,\delta\rightarrow 0;\\
\vartheta_{0,\delta}\in C^{2+\nu}(\bar{\Omega}),\quad \nabla\vartheta_{0,\delta}\cdot\mathbf{n}|_{\partial\Omega}=0,\quad
0<\underline{\vartheta}\leq\vartheta_{0,\delta}\leq\overline{\vartheta};\\
\vartheta_{0,\delta}\rightarrow\vartheta_{0}\,\,{\rm in}\,\,L^1(\Omega),\,\, {\rm as}\,\,\delta\rightarrow 0;\\
\mathbf{m}_{0,\delta}=
\begin{cases}
\mathbf{m}_0,\,\,& {\rm if}\,\,\varrho_{0,\delta}\geq\varrho_0,\\
0,\,\,& {\rm if}\,\,\varrho_{0,\delta}<\varrho_0,
\end{cases}
\end{cases}
\end{equation}
where the positive constant $\underline{\vartheta}$ is independent of $\delta>0$.
In particular, the regularized initial value of the total energy
\begin{equation}\label{initial}
E(0)=E_{\delta}(0)=\int_\Omega \left(\frac{1}{2}\frac{|\mathbf{m}_{0,\delta}|^2}{\varrho_{0,\delta}}
+\varrho_{0,\delta} P_e(\varrho_{0,\delta})
+\frac{\delta}{\beta-1}\varrho_{0,\delta}^\beta
+\varrho_{0,\delta}\vartheta_{0,\delta} \right)dx
\end{equation}
is bounded by a constant independent of $\delta>0$.

\begin{remark}\label{2.1.}
Roughly speaking, the extra term $\varepsilon\Delta\varrho$ is introduced in \eqref{2-1} to convert the hyperbolic equation $\eqref{1.1}_1$ into a parabolic one from which one can obtain better regularity property of the density $\varrho$. The quantity $\varepsilon\nabla\mathbf{u}\nabla\varrho$ is added to \eqref{2-2} to eliminate the term related to $\varepsilon\Delta\varrho$ in the energy inequality.
The new quantity $\eta\Delta\mathbf{u}$ represents the artificial viscosity which ensures the parabolic property of the momentum equation \eqref{2-2} and the term $\delta\nabla\varrho^\beta$ represents an artificial pressure which will play an essential role in obtaining estimates for the density $\varrho$.
The term $\delta\vartheta^3$ is introduced to improve the integrability of the temperature. The other terms related to the parameter $\delta>0$ are introduced to avoid technicalities in the temperature estimates.
\end{remark}

Similarly to Chapter 7 in \cite{Feireisl Dynamics 2004}, for fixed positive parameters $\varepsilon$, $\eta$ and $\delta$, the global solvability of the approximate system \eqref{2-1}-\eqref{2-302} can be obtained by Galerkin method and the result is as follows.
\begin{proposition}\label{2.1.}
For fixed positive parameters $\varepsilon$, $\eta$ and $\delta$, under the hypotheses of Theorem \ref{1.2.} and the assumptions imposed on the initial data \eqref{2-4}, if the exponent
\begin{equation*}
\beta>\max\{4,\gamma\},
\end{equation*}
then the approximate system \eqref{2-1}-\eqref{2-302} admits a global weak solution $(\varrho, \mathbf{u}, \vartheta)$ satisfying the following properties:
\begin{enumerate}[(i)]
\item the density $\varrho\geq 0$ satisfies
\begin{equation*}
\partial_t\varrho,\,\,\Delta\varrho\in L^p((0,T)\times\Omega)\quad{\rm for\,\,a\,\,certain\,\,}p>1,
\end{equation*}
the velocity $\mathbf{u}$ belongs to the space $L^2(0,T; H^1_0(\Omega))$, $(\varrho, \mathbf{u})$ solves the modified continuity equation \eqref{2-1} a.a. on $(0,T)\times\Omega$, and the boundary condition \eqref{2-101} together with the initial condition \eqref{2-102} are satisfied in the sense of traces. Moreover,
\begin{equation*}
\delta \int_0^T\int_\Omega \varrho^{\beta+1} dxdt \leq C(\varepsilon, \delta),
\end{equation*}
\begin{equation*}
\varepsilon \int_0^T\int_\Omega |\nabla\varrho|^2 dxdt \leq C,\quad{\rm with}\,\,C\,\,{\rm independent\,\,of}\,\, \varepsilon>0;
\end{equation*}

\item $(\varrho, \mathbf{u}, \vartheta)$ solves the modified momentum equation \eqref{2-2} in $\mathcal{D}'((0,T)\times\Omega)$. Moreover,
    \begin{equation*}
    \varrho\mathbf{u}\in C([0,T];L_{weak}^{\frac{2\gamma}{\gamma+1}}(\Omega))
    \end{equation*}
satisfies the initial condition \eqref{2-202};

\item the temperature $\vartheta\geq0$ satisfies
\begin{equation*}
\vartheta\in L^3((0,T)\times\Omega),\quad \vartheta\in L^2(0,T;H^1(\Omega)),
\end{equation*}
and the renormalized temperature inequality holds in $\mathcal{D}'((0,T)\times\Omega)$, that is,
\begin{equation}\label{2.8}
\begin{split}
&\int_0^T\int_\Omega (\delta+\varrho)H(\vartheta)\partial_t\varphi dxdt\\
&\quad+\int_0^T\int_\Omega \left(\varrho H(\vartheta)\mathbf{u}\cdot\nabla\varphi
+\mathcal{K}_h(\vartheta)\Delta\varphi-\delta\vartheta^3 h(\vartheta)\varphi\right) dxdt\\
&\leq\int_0^T\int_\Omega \left((\delta-1)\mathbb{S}:\nabla\mathbf{u}h(\vartheta)+h'(\vartheta)\kappa(\vartheta)|\nabla\vartheta|^2\right)\varphi dxdt\\
&\quad+\int_0^T\int_\Omega h(\vartheta)\vartheta p_\vartheta(\varrho){\rm div}\mathbf{u}\varphi dxdt
+\varepsilon \int_0^T\int_\Omega \nabla\varrho\cdot\nabla((H(\vartheta)-\vartheta h(\vartheta))\varphi) dxdt\\
&\quad-\int_\Omega(\delta+\varrho_{0,\delta})H(\vartheta_{0,\delta})\varphi(0)dx
\end{split}
\end{equation}
for any $\varphi\in C_c^\infty([0,T]\times\Omega)$ satisfying
\begin{equation}\label{2.9}
\varphi\geq 0,\,\,\varphi(T,\cdot)=0,\,\,\nabla\varphi\cdot \mathbf{n}|_{\partial\Omega}=0,
\end{equation}
where $H(\vartheta)=\int_0^\vartheta h(z) dz$ and $\mathcal{K}_h(\vartheta)=\int_0^\vartheta \kappa(z)h(z) dz$,
with the non-increasing $h\in C^2([0,\infty))$ satisfying
\begin{equation}\label{2.10}
0<h(0)<\infty,\,\,\lim_{z\rightarrow\infty}h(z)=0,\\
\end{equation}
and
\begin{equation}\label{2.01}
h''(z)h(z)\geq 2(h'(z))^2\,\,for \,\,all\,\,z\geq0;
\end{equation}

\item the energy inequality
\begin{equation}\label{2.11}
\begin{split}
&\int_0^T\int_\Omega(-\partial_t\psi)\left(\frac{1}{2}\varrho|\mathbf{u}|^2+\varrho P_m(\varrho)+\frac{\delta}{\beta-1}\varrho^\beta
+(\delta+\varrho)\vartheta \right)dxdt\\
&\quad+\int_0^T\int_\Omega \psi\left(\delta\mathbb{S}:\nabla\mathbf{u}+\eta|\nabla\mathbf{u}|^2+\delta \vartheta^3\right) dxdt\\
&\leq\int_\Omega \left(\frac{1}{2}\frac{|\mathbf{m}_{0,\delta}|^2}{\varrho_{0,\delta}}
+\varrho_{0,\delta} P_m(\varrho_{0,\delta})
+\frac{\delta}{\beta-1}\varrho_{0,\delta}^\beta
+(\delta+\varrho_{0,\delta})\vartheta_{0,\delta} \right)dx\\
&\quad-\int_0^T\int_\Omega p_b(\varrho){\rm div}\mathbf{u}\psi dxdt
\end{split}
\end{equation}
holds for any $\psi\in C^\infty([0,T])$ satisfying
\begin{equation}\label{2.1100}
\psi(0)=1,\quad \psi(T)=0,\quad \partial_t\psi\leq 0,
\end{equation}
where the elastic pressure component $p_e$ has been written in the form
\begin{equation*}
p_e(\varrho)=p_m(\varrho)-p_b(\varrho),
\end{equation*}
with $p_m$ non-decreasing, $P_m(\varrho)=\int_1^\varrho\frac{p_m(z)}{z^2}dz$, and
\begin{equation*}
p_b\in C^2([0,\infty)),\quad p_b\geq0,\quad
\end{equation*}
compactly supported in $[0,\infty)$.
\end{enumerate}
\end{proposition}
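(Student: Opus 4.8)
The strategy is the multi-level approximation scheme of Feireisl (Chapter~7 of \cite{Feireisl Dynamics 2004}), adjusted to the extra terms present in \eqref{2-1}--\eqref{2-302}. Throughout, $\varepsilon$, $\eta$ and $\delta$ stay fixed. The plan is to decouple the system by a fixed-point argument built around a Faedo--Galerkin approximation of the momentum equation, and only afterwards to send the Galerkin parameter $n$ to infinity, recovering along the way the renormalized temperature inequality \eqref{2.8} and the energy inequality \eqref{2.11} from the corresponding identities satisfied at the discrete level.

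First, fix $n\in\mathbb N$, let $\{\mathbf w_i\}_{i=1}^n\subset C^\infty(\overline\Omega)\cap H^1_0(\Omega)$ be, say, the first $n$ eigenfunctions of the Dirichlet Laplacian, and set $X_n=\mathrm{span}\{\mathbf w_1,\dots,\mathbf w_n\}$. For $\mathbf u\in C([0,T];X_n)$ the problem \eqref{2-1}--\eqref{2-102} is linear and uniformly parabolic, so classical theory gives a unique, spatially smooth solution $\varrho=\varrho[\mathbf u]$ depending continuously on $\mathbf u$, with $0<\underline\varrho(\mathbf u)\le\varrho\le\overline\varrho(\mathbf u)$ by the maximum principle; given such $\varrho$ and $\mathbf u$, the problem \eqref{2-3}--\eqref{2-302} is a quasilinear equation for $\vartheta$, uniformly parabolic because $\kappa(\vartheta)\ge\underline\kappa>0$ by \eqref{1.32}, and since $\mathbb S:\nabla\mathbf u=2\mu|D(\mathbf u)-\tfrac13\mathrm{div}\,\mathbf u\,\mathbb I|^2+(\tfrac23\mu+\lambda)(\mathrm{div}\,\mathbf u)^2\ge0$ by \eqref{1.002} and the remaining source terms are no worse than linear in $\vartheta$, the comparison principle yields a positive lower bound $\vartheta=\vartheta[\varrho,\mathbf u]\ge\underline\vartheta_*>0$ along with an upper bound and parabolic regularity (the $\delta$-terms in \eqref{2-3} only help here). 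Inserting $\varrho=\varrho[\mathbf u]$ and $\vartheta=\vartheta[\varrho[\mathbf u],\mathbf u]$ into \eqref{2-2} and testing with $\mathbf w_i$, I seek $\mathbf u_n\in C([0,T];X_n)$ solving the resulting integral identities for $i=1,\dots,n$; since $\varrho>0$ the mass matrix $(\int_\Omega\varrho\,\mathbf w_i\cdot\mathbf w_j\,dx)_{ij}$ is positive definite, so this is a well-posed fixed-point problem, solved by the Banach fixed point theorem locally in time and then continued via a priori bounds. Those bounds come from testing the Galerkin momentum equation with $\mathbf u_n$ and combining with \eqref{2-1} and with \eqref{2-3} integrated over $\Omega$; the $\varepsilon$-contributions cancel thanks to the term $\varepsilon\nabla\mathbf u\nabla\varrho$ in \eqref{2-2}, and what survives is the energy balance whose time-integrated, $\psi$-weighted form is precisely \eqref{2.11}, with all dissipative terms --- $\delta\mathbb S:\nabla\mathbf u$, $\eta|\nabla\mathbf u|^2$, $\delta\vartheta^3$, and the nonnegative $\varepsilon$-weighted terms, which are discarded --- on the favourable side. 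This yields, for fixed $\varepsilon,\eta,\delta$ and uniformly in $n$, bounds on $\sqrt\varrho\,\mathbf u_n$ in $L^\infty(0,T;L^2)$, on $\mathbf u_n$ in $L^2(0,T;H^1_0)$ (via the $\eta$-term), on $\varrho_n$ in $L^\infty(0,T;L^\beta)$, and on $\vartheta_n$ in $L^\infty(0,T;L^1)\cap L^3((0,T)\times\Omega)$, so the Galerkin system cannot blow up and $\mathbf u_n$ exists on all of $[0,T]$; testing \eqref{2-1} with $\varrho_n$ and using $\beta>4$ to control $\int\varrho_n^2\,\mathrm{div}\,\mathbf u_n$ gives the $\varepsilon$-independent bound $\varepsilon\int_0^T\!\int_\Omega|\nabla\varrho_n|^2\,dx\,dt\le C$, while parabolic regularity for \eqref{2-1} furnishes $\partial_t\varrho_n,\Delta\varrho_n\in L^p((0,T)\times\Omega)$ for some $p>1$.

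For the limit $n\to\infty$, one extracts $\varrho_n\to\varrho$ strongly (compactness for the parabolic equation \eqref{2-1}), $\mathbf u_n\rightharpoonup\mathbf u$ in $L^2(0,T;H^1_0)$, $\varrho_n\mathbf u_n\to\varrho\mathbf u$ by Aubin--Lions, and $\vartheta_n\to\vartheta$ strongly (Aubin--Lions applied to $(\delta+\varrho_n)\vartheta_n$, whose time derivative is controlled via \eqref{2-3}), as well as $\vartheta\in L^2(0,T;H^1)$ from the temperature equation and $\kappa\ge\underline\kappa$. These convergences pass to the limit in \eqref{2-1} and \eqref{2-2} (convective term via strong $\varrho_n\mathbf u_n$ and weak $\mathbf u_n$, pressure via strong $\varrho_n,\vartheta_n$). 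Since the bounds on $\mathbf u_n$ are not strong enough to pass the limit directly in the quadratic right-hand side of \eqref{2-3}, I multiply \eqref{2-3} by $h(\vartheta_n)$ with $h$ as in \eqref{2.10}--\eqref{2.01}; rewriting in terms of $H$ and $\mathcal K_h$ and using \eqref{2-1} to absorb the term proportional to $(\vartheta h(\vartheta)-H(\vartheta))\partial_t\varrho$ --- which is exactly where the $\varepsilon\nabla\varrho\cdot\nabla((H-\vartheta h)\varphi)$ term in \eqref{2.8} comes from --- produces, at level $n$, the equality version of \eqref{2.8}. Letting $n\to\infty$: the left-hand terms and the convective and $\delta\vartheta^3h$ terms converge by the strong convergences of $\varrho_n,\vartheta_n,\varrho_n\mathbf u_n$, using that $H$ has sub-linear growth and that, by \eqref{1.32} and $\lim_{z\to\infty}h(z)=0$, $\mathcal K_h$ has sub-cubic growth and is hence uniformly integrable along $\vartheta_n$; the nonnegative quadratic terms $(1-\delta)h(\vartheta_n)\,\mathbb S_n:\nabla\mathbf u_n$ and $-h'(\vartheta_n)\kappa(\vartheta_n)|\nabla\vartheta_n|^2$ (nonnegative because $1-\delta>0$, $h\ge0$, $h'\le0$, and both quadratic forms are convex and nonnegative) are only weakly lower semicontinuous, so testing against $\varphi\ge0$ and passing to the liminf turns the identity into the one-sided inequality \eqref{2.8}, the $\varepsilon$-term passing via $\Delta\varrho_n\rightharpoonup\Delta\varrho$ in $L^p$ against the strongly convergent factor $(H(\vartheta_n)-\vartheta_nh(\vartheta_n))\varphi$. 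Passing $n\to\infty$ in the Galerkin energy balance, by weak lower semicontinuity of the dissipation terms and lower semicontinuity in time, yields \eqref{2.11}; finally, testing \eqref{2-2} on the limit solution with a Bogovskii-type corrector of $\varrho-\frac1{|\Omega|}\int_\Omega\varrho$, as in \cite{Feireisl 2001}, gives the improved density bound $\delta\int_0^T\!\int_\Omega\varrho^{\beta+1}\,dx\,dt\le C(\varepsilon,\delta)$.

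The delicate point is this last step: one must simultaneously secure strong compactness of $\vartheta_n$ --- so that $H(\vartheta_n)$, $\mathcal K_h(\vartheta_n)$ and $h(\vartheta_n)$ all converge strongly and the renormalized formulation is stable --- while controlling the quadratic dissipation terms only through lower semicontinuity, which is exactly what forces the temperature equation to degenerate into the inequality \eqref{2.8} rather than an equality. Checking that the structural conditions \eqref{2.10}--\eqref{2.01} on $h$ (in particular that \eqref{2.01} makes $1/h$ concave) give every relevant term the sign needed for this argument, and that the lower bound $\vartheta\ge\underline\vartheta_*>0$, hence the uniform parabolicity of \eqref{2-3}, survives all the limits, are the parts that require the most care.
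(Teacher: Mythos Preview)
Your proposal is correct and follows essentially the same approach as the paper, which does not give a detailed proof but simply refers to Chapter~7 of \cite{Feireisl Dynamics 2004} and states that the Galerkin method applies; your sketch is a faithful and accurate expansion of that scheme, including the key point (noted in the paper's Remark following the proposition) that condition \eqref{2.01} on $h$ is precisely what guarantees the weak lower semicontinuity of $(\vartheta,\nabla\mathbf u)\mapsto h(\vartheta)\,\mathbb S:\nabla\mathbf u$ needed to turn the renormalized temperature identity into the inequality \eqref{2.8} in the limit $n\to\infty$.
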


\begin{remark}\label{1.6.}
As proved in \cite{Feireisl Dynamics 2004} for the constant viscosity coefficients case, the hypothesis \eqref{2.01} is imposed to ensure the convex and weakly lower semi-continuous property of the function
\begin{equation}\label{1-10}
(\vartheta, \nabla\mathbf{u})\mapsto h(\vartheta)\mathbb{S}:\nabla\mathbf{u},
\end{equation}
which is still valid for the temperature-depending viscosity coefficients case {\rm (cf. \cite{Xianpeng Hu and Dehua Wang})}.
\end{remark}

\section{Passing to the limit for $\varepsilon\to0$}\
\setcounter{equation}{0}
In this section, our main task is to pass the limit $\varepsilon\to 0$ in the approximate system \eqref{2-1}-\eqref{2-302}. For clarity, we denote by $(\varrho_\varepsilon, \mathbf{u}_\varepsilon, \vartheta_\varepsilon)$ the weak solutions constructed in Proposition \ref{2.1.}. Following Section 4 in \cite{Feireisl On the motion 2004}, the main difficulty lies in obtaining the strong convergence of the density
\begin{equation}\label{3-0000}
\varrho_\varepsilon\to \varrho\quad{\rm in}\,\,L^1((0,T)\times\Omega),
\end{equation}
which can be achieved by the weak continuity property of the quantity
\begin{equation*}
P_{eff}=p-(\lambda+2\mu){\rm div}\mathbf{u}
\end{equation*}
called usually the effective viscous pressure. This method was first proposed and proved by Lions \cite{Lions 1998 Compressible models} for the barotropic case with constant viscous coefficients. More specifically, if
\begin{equation*}
\begin{cases}
\varrho_\varepsilon \rightharpoonup \varrho&\quad{\rm weakly\,\, in\,\,}L^1((0,T)\times\Omega),\\
p(\varrho_\varepsilon, \vartheta_\varepsilon)
\rightharpoonup\overline{p}&\quad{\rm weakly\,\, in\,\,}L^1((0,T)\times\Omega),\\
\mathbf{u}_\varepsilon\rightharpoonup\mathbf{u}&
\quad{\rm weakly\,\,in\,\,}L^2(0,T;H_0^1(\Omega)),\\
\end{cases}
\end{equation*}
then under certain hypotheses it holds
\begin{equation*}
\begin{split}
\left(p(\varrho_\varepsilon, \vartheta_\varepsilon)
-(\lambda+2\mu){\rm div}\mathbf{u}_\varepsilon\right)\varrho_\varepsilon
\rightharpoonup \left(\overline{p}
-(\lambda+2\mu){\rm div}\mathbf{u}\right)\varrho
\end{split}
\end{equation*}
weakly in $L^1((0,T)\times\Omega)$.
Then, Feireisl \cite{Feireisl On the motion 2004} extended the above result to the case where $\mu$ and $\lambda$ are functions of the temperature and satisfy
\begin{equation*}
\mu(\vartheta)\geq\underline{\mu}>0,\quad\lambda(\vartheta)+\frac{2}{3}\mu(\vartheta)\geq 0,
\end{equation*}
for some positive constant $\underline{\mu}>0$.

For fixed $\eta, \delta>0$, the effective viscous pressure corresponding to the approximate system \eqref{2-1}-\eqref{2-302} are as follows
\begin{equation*}
p+\delta\varrho^\beta
-(\lambda(\vartheta)+2\mu(\vartheta)+\eta){\rm div}\mathbf{u}.
\end{equation*}
Based on Section 4 in \cite{Feireisl On the motion 2004} and under assumptions of Theorem \ref{1.2.}, we can show that
\begin{equation}\label{3-007}
\begin{split}
&\lim_{\varepsilon\to 0}\int_0^T\int_\Omega \varphi \left(p(\varrho_\varepsilon, \vartheta_\varepsilon)+\delta\varrho_\varepsilon^\beta
-(\lambda(\vartheta_\varepsilon)+2\mu(\vartheta_\varepsilon)+\eta){\rm div}\mathbf{u}_\varepsilon)\right)\varrho_\varepsilon dxdt\\
&=\int_0^T\int_\Omega \varphi  \left(\overline{p}-(\lambda(\vartheta)+2\mu(\vartheta)+\eta){\rm div}\mathbf{u})\right)\varrho dxdt,
\end{split}
\end{equation}
for any $\varphi\in C_c^\infty((0,T)\times\Omega)$ provided that
\begin{equation*}
\begin{cases}
p(\varrho_\varepsilon, \vartheta_\varepsilon)+\delta\varrho_\varepsilon^\beta\to \overline{p}&\quad {\rm weakly\,\,in\,\,}L^{(\beta+1)/\beta}((0,T)\times\Omega),\\
\varrho_\varepsilon\to \varrho&\quad {\rm in\,\,}C([0,T];L^\beta_{weak}(\Omega)),\\
\mathbf{u}_\varepsilon\rightharpoonup\mathbf{u}&
\quad{\rm weakly\,\,in\,\,}L^2(0,T;H_0^1(\Omega)),\\
\vartheta_\varepsilon\to \vartheta&\quad {\rm in\,\,}L^2((0,T)\times\Omega).\\
\end{cases}
\end{equation*}

With the strong convergence of the density $\varrho_\varepsilon$ \eqref{3-0000}, we conclude our result as follows.

\begin{proposition}\label{3.1.}
For fixed positive parameters $\eta$ and $\delta$, under the hypotheses of Theorem \ref{1.2.},
the initial-boundary value problem \eqref{1.1}, \eqref{1.4} and \eqref{1.3} with parameters $\eta$ and $\delta$ admits an
approximate solution $(\varrho, \mathbf{u}, \vartheta)$, which is also the limit of the weak solution constructed in Proposition \ref{2.1.} when $\varepsilon\to 0$, satisfying
\begin{enumerate}[(i)]
\item the density $\varrho\geq 0$ satisfies
\begin{equation*}
\varrho\in C([0,T];L^\beta_{weak}(\Omega))\cap L^{\beta+1}((0,T)\times\Omega)
\end{equation*}
and the initial condition \eqref{2-102}. The velocity $\mathbf{u}$ belongs to the space
$L^2(0,T; H^1_0(\Omega))$, and $(\varrho, \mathbf{u})$ solves the continuity equation $\eqref{1.1}_1$ in the sense of distributions;

\item $(\varrho, \mathbf{u}, \vartheta)$ solves a modified momentum equation
\begin{equation}\label{3-10}
\partial_t(\varrho\mathbf{u})+{\rm div}(\varrho\mathbf{u}\otimes\mathbf{u})+\nabla(p(\varrho,\vartheta)+\delta\varrho^\beta)
={\rm div}\mathbb{S}+\eta\Delta\mathbf{u}
\end{equation}
in $\mathcal{D}'((0,T)\times\Omega)$, where the viscous stress tensor $\mathbb{S}$ is given by
\begin{equation*}
\mathbb{S}=\mu(\vartheta)(\nabla\mathbf{u}+\nabla^T\mathbf{u})+\lambda(\vartheta){\rm div}\mathbf{u}\mathbb{I}.
\end{equation*}
Moreover, $\varrho\mathbf{u}\in C([0,T]; L^{\frac{2\gamma}{\gamma+1}}_{weak}(\Omega))$ satisfies the initial condition \eqref{2-202};

\item the temperature $\vartheta\geq 0$ satisfies
\begin{equation*}
\vartheta \in L^3((0,T)\times\Omega),\quad \vartheta^{\frac{3-\omega}{2}}\in L^2(0,T;H^1(\Omega)),\,\,\omega\in(0,1),
\end{equation*}
and the initial condition \eqref{2-302} is satisfied in the sense of distributions.
Furthermore, the renormalized temperature inequality holds in $\mathcal{D}'((0,T)\times\Omega)$, that is,
\begin{equation}\label{3-11}
\begin{split}
&\int_0^T\int_\Omega (\delta+\varrho)H(\vartheta)\partial_t\varphi dxdt\\
&\quad+\int_0^T\int_\Omega \left(\varrho H(\vartheta)\mathbf{u}\cdot\nabla\varphi
+\mathcal{K}_h(\vartheta)\Delta\varphi-\delta\vartheta^3 h(\vartheta)\varphi\right) dxdt\\
&\leq\int_0^T\int_\Omega \left((\delta-1)\mathbb{S}:\nabla\mathbf{u}h(\vartheta)+h'(\vartheta)\kappa(\vartheta)
|\nabla\vartheta|^2\right)\varphi dxdt\\
&\quad+\int_0^T\int_\Omega h(\vartheta)\vartheta p_\vartheta(\varrho){\rm div}\mathbf{u}\varphi dxdt -\int_\Omega(\delta+\varrho_{0,\delta})H(\vartheta_{0,\delta})\varphi(0)dx
\end{split}
\end{equation}
for any $\varphi\in C_c^\infty([0,T]\times\Omega)$ satisfying
\begin{equation*}
\varphi\geq 0,\,\,\varphi(T,\cdot)=0,\,\,\nabla\varphi\cdot \mathbf{n}|_{\partial\Omega}=0,
\end{equation*}
where $H(\vartheta)=\int_0^\vartheta h(z) dz$ and $\mathcal{K}_h(\vartheta)=\int_0^\vartheta \kappa(z)h(z) dz$,
with the non-increasing $h\in C^2([0,\infty))$ satisfying
\begin{equation*}
0<h(0)<\infty,\,\,\lim_{z\rightarrow\infty}h(z)=0,\\
\end{equation*}
and
\begin{equation*}
h''(z)h(z)\geq 2(h'(z))^2\,\,for \,\,all\,\,z\geq0;
\end{equation*}

\item the energy inequality
\begin{equation}\label{3-12}
\begin{split}
&\int_0^T\int_\Omega(-\partial_t\psi)
\left(\frac{1}{2}\varrho|\mathbf{u}|^2+\varrho P_e(\varrho)+\frac{\delta}{\beta-1}\varrho^\beta
+(\delta+\varrho)\vartheta \right)dxdt\\
&\quad+\int_0^T\int_\Omega \psi\left(\delta\mathbb{S}:\nabla\mathbf{u}+\eta|\nabla\mathbf{u}|^2+\delta \vartheta^3\right) dxdt\\
&\leq\int_\Omega \left(\frac{1}{2}\frac{|\mathbf{m}_{0,\delta}|^2}{\varrho_{0,\delta}}
+\varrho_{0,\delta} P_e(\varrho_{0,\delta})
+\frac{\delta}{\beta-1}\varrho_{0,\delta}^\beta
+(\delta+\varrho_{0,\delta})\vartheta_{0,\delta} \right)dx
\end{split}
\end{equation}
holds for any $\psi\in C^\infty([0,T])$ satisfying
\begin{equation*}
\psi(0)=1,\quad \psi(T)=0,\quad \partial_t\psi\leq 0.
\end{equation*}
\end{enumerate}
\end{proposition}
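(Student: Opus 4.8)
\emph{Proof proposal.} The plan is to pass to the limit $\varepsilon\to0$ in the sequence $(\varrho_\varepsilon,\mathbf{u}_\varepsilon,\vartheta_\varepsilon)$ furnished by Proposition~\ref{2.1.}, keeping $\eta,\delta>0$ fixed; the scheme follows Section~4 of \cite{Feireisl On the motion 2004}, the new difficulty being that the temperature-dependent $\mu,\lambda$ (satisfying only \eqref{1.002} and \eqref{1.33}) force a separate compactness argument for $\vartheta_\varepsilon$. \textbf{Step 1: uniform estimates.} From the energy inequality \eqref{2.11} with $\psi$ approximating $\mathbf{1}_{(0,t)}$, using $\delta\,\mathbb{S}:\nabla\mathbf{u}+\eta|\nabla\mathbf{u}|^2\ge\eta|\nabla\mathbf{u}|^2$ and $\mathbb{S}:\nabla\mathbf{u}=2\mu|D^0(\mathbf{u})|^2+(\lambda+\tfrac23\mu)|{\rm div}\mathbf{u}|^2\ge0$, one obtains bounds independent of $\varepsilon$ for $\varrho_\varepsilon$ in $L^\infty(0,T;L^\beta(\Omega))$, for $\sqrt{\varrho_\varepsilon}\,\mathbf{u}_\varepsilon$ in $L^\infty(0,T;L^2(\Omega))$, for $\mathbf{u}_\varepsilon$ in $L^2(0,T;H^1_0(\Omega))$ (with an $\eta$-dependent constant), for $\vartheta_\varepsilon$ in $L^3((0,T)\times\Omega)$, for $\delta\,\mathbb{S}_\varepsilon:\nabla\mathbf{u}_\varepsilon$ and $\delta\vartheta_\varepsilon^3$ in $L^1((0,T)\times\Omega)$, and for $(\delta+\varrho_\varepsilon)\vartheta_\varepsilon$ in $L^\infty(0,T;L^1(\Omega))$; since $\delta+\varrho_\varepsilon\ge\delta$, also $\vartheta_\varepsilon$ is bounded in $L^\infty(0,T;L^1(\Omega))$. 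Testing \eqref{2-2} with a Bogovskii potential of $\varrho_\varepsilon-\langle\varrho_\varepsilon\rangle$ upgrades this to $\delta\int_0^T\int_\Omega\varrho_\varepsilon^{\beta+1}dxdt\le C$ uniformly in $\varepsilon$; inserting a convenient $h$ into the renormalized temperature inequality \eqref{2.8} (for instance $h(z)=(1+z)^{-1}$, which satisfies \eqref{2.01}) together with \eqref{1.32} controls $\vartheta_\varepsilon$ in $L^2(0,T;H^1(\Omega))$ and $\vartheta_\varepsilon^{(3-\omega)/2}$ in $L^2(0,T;H^1(\Omega))$, $\omega\in(0,1)$, again uniformly in $\varepsilon$. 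Finally, Proposition~\ref{2.1.} gives $\varepsilon\int_0^T\int_\Omega|\nabla\varrho_\varepsilon|^2dxdt\le C$.

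\textbf{Step 2: the easy passages to the limit.} Along a subsequence, $(\varrho_\varepsilon,\mathbf{u}_\varepsilon,\vartheta_\varepsilon)\to(\varrho,\mathbf{u},\vartheta)$ in the corresponding weak(-$*$) topologies; since $\sqrt\varepsilon\,\nabla\varrho_\varepsilon$ is bounded in $L^2$, the artificial terms $\varepsilon\Delta\varrho_\varepsilon$, $\varepsilon\nabla\mathbf{u}_\varepsilon\nabla\varrho_\varepsilon$ and the $\varepsilon$-term of \eqref{2.8} all vanish in $\mathcal{D}'$. For the temperature we use the renormalized version of \eqref{2-3} with the above bounded $h$: then $h(\vartheta_\varepsilon)\mathbb{S}_\varepsilon:\nabla\mathbf{u}_\varepsilon$ is bounded in $L^1((0,T)\times\Omega)$ since $\mathbb{S}_\varepsilon:\nabla\mathbf{u}_\varepsilon$ is (Step~1), while the remaining source terms are bounded in $L^1(0,T;L^q(\Omega))$ for some $q>1$ by \eqref{1.30}--\eqref{1.31}; hence $\partial_t\big((\delta+\varrho_\varepsilon)H(\vartheta_\varepsilon)\big)$ is controlled in a negative Sobolev norm, and together with $H(\vartheta_\varepsilon)$ bounded in $L^2(0,T;H^1(\Omega))$ and the lower bound $\delta+\varrho_\varepsilon\ge\delta>0$, the standard compactness argument for the temperature (see \cite{Feireisl On the motion 2004, Feireisl Dynamics 2004}) gives $\vartheta_\varepsilon\to\vartheta$ a.e.\ and, by the $L^3$ bound, strongly in $L^2((0,T)\times\Omega)$. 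Likewise $\varrho_\varepsilon\mathbf{u}_\varepsilon$ is precompact in $C([0,T];L^p_{weak}(\Omega))$ for some $p>1$ and strongly convergent in a negative Sobolev norm, which suffices to identify the limits of $\varrho_\varepsilon\mathbf{u}_\varepsilon$ and $\varrho_\varepsilon\mathbf{u}_\varepsilon\otimes\mathbf{u}_\varepsilon$, and $p(\varrho_\varepsilon,\vartheta_\varepsilon)+\delta\varrho_\varepsilon^\beta\rightharpoonup\overline{p}$ weakly in $L^{(\beta+1)/\beta}((0,T)\times\Omega)$.

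\textbf{Step 3: strong convergence of the density.} This is the core difficulty. Because the shear viscosity of \eqref{2-2} equals $\mu(\vartheta_\varepsilon)+\eta\ge\eta>0$, it has a positive lower bound, so Feireisl's commutator analysis of the effective viscous flux applies and yields the identity \eqref{3-007} (here the strong convergence $\vartheta_\varepsilon\to\vartheta$ from Step~2 is used so that $\lambda(\vartheta_\varepsilon)+2\mu(\vartheta_\varepsilon)+\eta$ converges strongly). One then checks that the limit $(\varrho,\mathbf{u})$ is a renormalized solution of the continuity equation: each $\varrho_\varepsilon$ is renormalized, being a solution of the parabolic equation \eqref{2-1}, and the uniform bound $\delta\int\int\varrho_\varepsilon^{\beta+1}\le C$ keeps the oscillation defect measure of $\{\varrho_\varepsilon\}$ (of exponent $\beta+1>2$) finite, so the renormalization property survives the limit. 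Subtracting the renormalized equations for $\varrho_\varepsilon$ and for $\varrho$ taken with $b(z)=z\ln z$ and substituting \eqref{3-007}, one is led to a Gronwall inequality for $t\mapsto\int_\Omega(\overline{\varrho\ln\varrho}-\varrho\ln\varrho)(t,x)\,dx$: writing $p_e=p_m-p_b$, the principal part $\varrho\mapsto p_m(\varrho)+\vartheta\,p_\vartheta(\varrho)+\delta\varrho^\beta$ is non-decreasing in $\varrho$ for each fixed $\vartheta$ (recall $p_\vartheta'\ge0$) and contributes a term of favorable sign, the thermal pressure passes to the limit with the correct sign because $\vartheta_\varepsilon\to\vartheta$ strongly, and the contribution of the compactly supported, Lipschitz correction $p_b$ is dominated by the finite oscillation defect measure and absorbed into the Gronwall term. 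Since the quantity vanishes at $t=0$, it vanishes identically, and strict convexity of $z\ln z$ forces $\varrho_\varepsilon\to\varrho$ a.e.\ and strongly in $L^q((0,T)\times\Omega)$ for every $q<\beta+1$; in particular $\overline{p}=p(\varrho,\vartheta)+\delta\varrho^\beta$.

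\textbf{Step 4: the remaining limits and conclusion.} With $\varrho_\varepsilon\to\varrho$, $\vartheta_\varepsilon\to\vartheta$ strongly and $\mathbf{u}_\varepsilon\rightharpoonup\mathbf{u}$ in $L^2(0,T;H^1_0)$, passing to the limit in \eqref{2-1} and \eqref{2-2} yields the continuity equation and the modified momentum equation \eqref{3-10}, and \eqref{2-302} is recovered in the distributional sense from the $\varphi(0)$ term. In \eqref{2.8} the only delicate terms are $\int\int(\delta-1)\mathbb{S}_\varepsilon:\nabla\mathbf{u}_\varepsilon\,h(\vartheta_\varepsilon)\varphi$ --- since $\delta-1<0$ and $(\vartheta,\nabla\mathbf{u})\mapsto h(\vartheta)\,\mathbb{S}:\nabla\mathbf{u}$ is convex and weakly lower semicontinuous by \eqref{2.01} (Remark~\ref{1.6.}), the inequality is preserved in the limit --- and $\int\int h'(\vartheta_\varepsilon)\kappa(\vartheta_\varepsilon)|\nabla\vartheta_\varepsilon|^2\varphi$, which equals $-\int\int|\nabla G(\vartheta_\varepsilon)|^2\varphi$ for a primitive $G$ of $\sqrt{-h'\kappa}$ and is again handled by weak lower semicontinuity with the correct sign; the other terms converge by Steps~2--3 and the $\varepsilon$-term disappears, giving \eqref{3-11}. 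In the energy inequality \eqref{2.11} the dissipative terms $\delta\,\mathbb{S}_\varepsilon:\nabla\mathbf{u}_\varepsilon+\eta|\nabla\mathbf{u}_\varepsilon|^2$ and $\delta\vartheta_\varepsilon^3$ are convex in $\nabla\mathbf{u}_\varepsilon$, resp.\ $\vartheta_\varepsilon$, hence weakly lower semicontinuous; the kinetic and internal energies pass to the limit by lower semicontinuity and the strong convergences; and the term $-\int\int p_b(\varrho_\varepsilon)\,{\rm div}\,\mathbf{u}_\varepsilon\,\psi$ recombines, through the renormalized continuity equation taken with $b(z)=zP_b(z)$, with $\varrho_\varepsilon P_m(\varrho_\varepsilon)$ into $\varrho P_e(\varrho)$, producing \eqref{3-12}; the bound $\vartheta^{(3-\omega)/2}\in L^2(0,T;H^1)$ is inherited from Step~1 by weak lower semicontinuity of the $H^1$-norm. \emph{The main obstacle is Step~3}: transferring the renormalization property to the limit density and closing the Gronwall estimate on $\overline{\varrho\ln\varrho}-\varrho\ln\varrho$, which is precisely where the uniform artificial-pressure bound, the lower bound $\mu(\vartheta_\varepsilon)+\eta\ge\eta>0$ underpinning \eqref{3-007}, and the monotonicity of the pressure in $\varrho$ (up to the compact correction $p_b$) all have to be used simultaneously, with the temperature-dependence of $\mu,\lambda$ absorbed through the strong convergence of $\vartheta_\varepsilon$ established in Step~2.
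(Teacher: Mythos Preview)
Your proposal is correct and follows essentially the same route as the paper. Section~3 of the paper is in fact only a sketch: it singles out the strong convergence of $\varrho_\varepsilon$ as the main difficulty, records the effective viscous flux identity \eqref{3-007} with shear viscosity $\mu(\vartheta_\varepsilon)+\eta\ge\eta>0$, notes that it requires precisely the convergences you list (including $\vartheta_\varepsilon\to\vartheta$ strongly in $L^2$), and defers all details to \cite{Feireisl On the motion 2004}. Your Steps~1--4 are an accurate expansion of that programme, with the correct use of the renormalized continuity equation and the $z\ln z$ convexity argument in Step~3, and of Remark~\ref{1.6.} for the lower semicontinuity of $h(\vartheta)\,\mathbb{S}:\nabla\mathbf{u}$ in Step~4.
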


\section{Passing to the limit for $\eta\to0$}\
\setcounter{equation}{0}
In this section, our goal is to eliminate the artificial viscosity $\eta\Delta\mathbf{u}$ in the modified momentum equation \eqref{3-10}. To this end, we denote by $(\varrho_\eta, \mathbf{u}_\eta, \vartheta_\eta)$ the weak solutions constructed in Proposition \ref{3.1.}. Note that, for any fixed $\eta>0$, $\sqrt{\eta}\nabla\mathbf{u}$ is bounded in $L^2((0,T)\times\Omega)$, which plays an essential role in obtaining the compactness of weak solutions as $n\to \infty$ and $\varepsilon\to 0$. However, this estimate is not uniform on $\eta>0$. Therefore, to obtain the uniform bound $\nabla\mathbf{u}_{\eta} \in L^2((0,T)\times\Omega)$ with respect to $\eta$, we need to
show that the temperature-depending viscosity coefficient $\mu(\vartheta_\eta)$ is bounded below from zero, which can be achieved by obtaining a uniform positive lower bound for the temperature $\vartheta_\eta$.

\subsection{{\bf A positive bound from below for the temperature}}\

We first give our result about the positive bound for the temperature $\vartheta_\eta$.
\begin{proposition}\label{6.1.}
Let $(\varrho_\eta, \mathbf{u}_\eta, \vartheta_\eta)$ be a weak solution constructed in Proposition \ref{3.1.}. Assume that the viscosity coefficients satisfy the assumptions in Theorem \ref{1.2.}, and the initial temperature satisfies the assumptions in \eqref{2-4}, that is,
\begin{equation}\label{5-0}
\vartheta_\eta(0)=\vartheta_{0,\delta}\geq\underline{\vartheta}>0.
\end{equation}
Then there exists a constant $\widetilde{\vartheta}>0$ such that
\begin{equation}\label{5-1}
\vartheta_{\eta}(t,x)\geq\widetilde{\vartheta}>0
\end{equation}
for all $t\in[0,T]$ and almost all $x\in\Omega$.
\end{proposition}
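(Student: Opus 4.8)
The plan is to implement De Giorgi's iteration on the truncated, renormalized temperature inequality \eqref{3-11}, following the strategy of Mellet--Vasseur \cite{Vasseur temperature} but with the thermal energy inequality there replaced by \eqref{3-11}. The crucial point is that every estimate must be uniform in $\eta$, so I must never use the $\eta|\nabla\mathbf u_\eta|^2$ term as a coercive quantity; instead I will rely only on the $\eta$-independent bounds coming from the energy inequality \eqref{3-12} (namely $\sqrt\delta\,\mathbb S:\nabla\mathbf u_\eta$, $\delta\vartheta_\eta^3$, and the total energy) together with the structural sign conditions \eqref{1.33}--\eqref{1.330} on the viscosity. First I would fix the renormalizing function: choose $h$ satisfying \eqref{2.10}--\eqref{2.01} of the form $h(z)=1/(z+\varepsilon_0)$ (rescaled to have $h(0)<\infty$), so that $H(\vartheta)$ behaves like $\ln$ and $-h'(\vartheta)\kappa(\vartheta)|\nabla\vartheta|^2\le 0$ is a good (negative) term, precisely the feature that made the logarithm work in \cite{Vasseur temperature}. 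Testing \eqref{3-11} against a cutoff in time and applying it on the level set $\{\vartheta_\eta < k\}$ for a decreasing sequence of levels $k_j\downarrow \widetilde\vartheta$, I obtain an inequality for the ``energy'' $A_j=\sup_t\|(\delta+\varrho_\eta)(H(\vartheta_\eta)-H(k_j))_+\|_{L^1}+\|\nabla (H(\vartheta_\eta)-H(k_j))_+\|_{L^2}^2$ of the form $A_{j+1}\le C\,b^{\,j}\,A_j^{1+\alpha}$ with $\alpha>0$, which forces $A_j\to 0$ provided $A_0$ is small, i.e. provided the starting level is below a threshold depending only on the data; this yields the pointwise lower bound $\vartheta_\eta\ge\widetilde\vartheta$.

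The key steps in order are: (1) justify that the truncated function $(H(\vartheta_\eta)-H(k))_+$ is an admissible object in \eqref{3-11} — this requires a regularization/approximation of the test function $\varphi$ since $H$ is only Lipschitz away from zero and $\varphi$ must satisfy \eqref{2.9}; I would first integrate \eqref{3-11} against $\varphi$ approximating the characteristic function of a time interval, then localize in the temperature variable by composing with smooth approximations of $z\mapsto (H(z)-H(k))_+$ and pass to the limit. (2) Control the right-hand side terms of \eqref{3-11}. The viscous dissipation term $(\delta-1)\mathbb S:\nabla\mathbf u_\eta\, h(\vartheta_\eta)$ has a bad sign ($\delta-1<0$, so $-\mathbb S:\nabla\mathbf u_\eta\le 0$, which is actually favorable), while the term $h(\vartheta_\eta)\vartheta_\eta p_\vartheta(\varrho_\eta)\,{\rm div}\mathbf u_\eta$ must be absorbed: using $p_\vartheta(\varrho_\eta)\le c(1+\varrho_\eta^{\gamma/3})$ (or $=R\varrho_\eta$ in case \eqref{1.310}), Hölder, and the $\eta$-independent bounds on $\varrho_\eta$ and on $\mathbb S:\nabla\mathbf u_\eta$, I split ${\rm div}\mathbf u_\eta$ via \eqref{1.33} so that $\nu(\vartheta_\eta)|{\rm div}\mathbf u_\eta|^2\le \mathbb S:\nabla\mathbf u_\eta$ up to constants, and then use \eqref{1.330}, $\nu(\vartheta_\eta)\ge C\vartheta_\eta$ for small $\vartheta_\eta$, to gain the factor $\vartheta_\eta$ needed to kill the $\vartheta_\eta$ in the numerator near the level set. (3) On the set $\{\vartheta_\eta<k\}$ with $k$ small, the heat conductivity satisfies $\kappa(\vartheta_\eta)\ge\underline\kappa$, giving the coercive gradient term $\underline\kappa\int |h'(\vartheta_\eta)|\,|\nabla\vartheta_\eta|^2\varphi = \underline\kappa\int|\nabla (H(\vartheta_\eta)-H(k))_+|^2\varphi$ after the choice of $h$. (4) Run the Sobolev/interpolation (Gagliardo--Nirenberg in $\mathbb R^3$) and measure-of-level-set bookkeeping exactly as in \cite{Vasseur temperature,De} to produce the nonlinear recursion for $A_j$ and conclude.

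The main obstacle I anticipate is step (2): ensuring that the coupling term $h(\vartheta_\eta)\vartheta_\eta p_\vartheta(\varrho_\eta){\rm div}\mathbf u_\eta$ — the only term involving $\mathbf u_\eta$ that is not sign-definite — can be controlled \emph{uniformly in $\eta$}. The subtlety is that we cannot afford any use of $\eta^{1/2}\nabla\mathbf u_\eta$, so the full $H^1_0$ bound on $\mathbf u_\eta$ available for fixed $\eta$ is useless; we must extract a bound on ${\rm div}\mathbf u_\eta$ in $L^2$ \emph{on the low-temperature region only}, and for that the degenerate-coercivity condition \eqref{1.330} ($\nu(\vartheta)\ge C\vartheta$ for small $\vartheta$) is exactly what is needed so that $\int_{\{\vartheta_\eta<k\}}\vartheta_\eta|{\rm div}\mathbf u_\eta|^2\le C\int \mathbb S:\nabla\mathbf u_\eta<\infty$ with constant independent of $\eta$ from \eqref{3-12}. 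Matching the powers of $\vartheta_\eta$ so that this absorbs the product against $h(\vartheta_\eta)\vartheta_\eta p_\vartheta(\varrho_\eta)$ (where $h(\vartheta_\eta)\sim 1$ near the level set and $p_\vartheta(\varrho_\eta)$ is bounded in a suitable $L^q$), together with verifying that the resulting constant in the De Giorgi recursion depends only on $T$, $\delta$, the initial energy, and $\underline\vartheta$ — but not on $\eta$ — is the technical heart of the argument. A secondary, more routine difficulty is the rigorous justification in step (1) that test functions of the form $(H(\vartheta_\eta)-H(k))_+\,\chi(t)$ are admissible in the distributional inequality \eqref{3-11}, which requires a careful mollification argument respecting the Neumann condition $\nabla\varphi\cdot\mathbf n|_{\partial\Omega}=0$.
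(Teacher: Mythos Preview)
Your overall scheme---De Giorgi iteration on the renormalized inequality \eqref{3-11} with a logarithmic $h$, level sets $C_k\downarrow e^{-M}$, and a nonlinear recursion $A_{j}\le C\,b^{j}A_{j-1}^{1+\alpha}$---is exactly the paper's route. The structural observations in steps (1), (3), (4) are correct, and you have correctly identified that the viscous term $(1-\delta)h(\vartheta_\eta)\,\mathbb S:\nabla\mathbf u_\eta$ carries a favorable sign.

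The gap is in step~(2), precisely where you flag the main obstacle. Your plan is to bound the coupling term by invoking the \emph{global} estimate $\int\mathbb S:\nabla\mathbf u_\eta<\infty$ from \eqref{3-12}, so that $\int_{\{\vartheta_\eta<k\}}\vartheta_\eta|{\rm div}\mathbf u_\eta|^2\le C\int\mathbb S:\nabla\mathbf u_\eta$. This fails to close the iteration: after Cauchy--Schwarz, the ${\rm div}\mathbf u_\eta$ factor contributes only a fixed constant, and the remaining $\bigl(\int p_\vartheta^2\,1_{\{\vartheta_\eta<k_j\}}\bigr)^{1/2}$ gives at best $A_{j-1}^{\sigma/2}$. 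To close via Lemma~\ref{6.2.} you would need $\sigma>2$, i.e.\ $(\alpha+\beta)/2>2$, which combined with the H\"older constraint $p=2/(\alpha-\beta)>1$ forces $\beta>1$, contradicting $\beta<1$. Moreover, the only $\eta$-free bound that \eqref{3-12} supplies is $\delta\int\mathbb S:\nabla\mathbf u_\eta\le C$, so your constant would be $C/\delta$ and the resulting $\widetilde\vartheta$ would depend on $\delta$, which defeats the purpose of Remark~\ref{6-1}.

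The fix is already implicit in your own remark that $(1-\delta)h(\vartheta_\eta)\,\mathbb S:\nabla\mathbf u_\eta$ is favorable: put the \emph{localized} dissipation $\int_{\{\vartheta_\eta+\omega\le C_k\}}\frac{\nu(\vartheta_\eta)}{\vartheta_\eta+\omega}|D(\mathbf u_\eta)|^2$ into the definition of the iteration quantity (the paper's $U_{k,\omega}$, equation \eqref{6.13}) and absorb into it directly. Young's inequality then gives
\[
\int \tfrac{\vartheta_\eta}{\vartheta_\eta+\omega}\,p_\vartheta\,|{\rm div}\mathbf u_\eta|\,1_{\{\vartheta_\eta+\omega\le C_k\}}
\;\le\; \tfrac12 U_{k,\omega}\;+\;C\!\int \tfrac{\vartheta_\eta}{\nu(\vartheta_\eta)}\,p_\vartheta^2\,1_{\{\vartheta_\eta+\omega\le C_k\}},
\]
and now \eqref{1.330} bounds $\vartheta_\eta/\nu(\vartheta_\eta)$ by a constant on the level set, leaving only $\int p_\vartheta^2\,1_{\{\vartheta_\eta+\omega\le C_k\}}\le C\,2^{k\alpha}M^{-\alpha}\,U_{k-1,\omega}^{\sigma}$ with $\sigma>1$. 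This is the paper's Step~3, and it makes all constants independent of both $\eta$ and $\delta$. With this correction your proposal coincides with the paper's proof.
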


\begin{remark}\label{6-1}
It should be pointed out that the constant $\widetilde{\vartheta}$ does not depend on the positive parameters $\eta$ and $\delta$, which is essential to take the limit $\eta\to 0$ and $\delta\to 0$.
\end{remark}

Before proving Proposition \ref{6.1.}, we introduce two important lemmas.
\begin{lemma}\label{6.2.}{\rm (\cite{Vasseur  Lp estimates})}
Let $U_k$ be a sequence satisfying
\begin{enumerate}[(i)]
\item $0\leq U_0\leq C$;

\item for some constants $A\geq1$, $1<\beta_1<\beta_2$ and $C>0$,
\begin{equation}\label{6.4}
0\leq U_k\leq C\frac{A^k}{K}(U_{k-1}^{\beta_1}+U_{k-1}^{\beta_2}).
\end{equation}
\end{enumerate}
Then there exists some $K_0$ such that for every $K>K_0$, the sequence $U_k$ converges to $0$ when $k$ goes to infinity.
\end{lemma}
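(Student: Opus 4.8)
The plan is to show that, once the parameter $K$ is large enough, the sequence $(U_k)$ is eventually dominated by a genuinely geometric sequence $\theta^{k}$ with $\theta\in(0,1)$, which immediately forces $U_k\to0$. The whole argument is an instance of the classical De~Giorgi / Stampacchia fast-geometric-convergence scheme, so I expect only bookkeeping, not a genuine obstacle.

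First I would perform a one-step index shift in order to arrange that the zeroth term is at most $1$. Setting $V_j:=U_{j+1}$, relation \eqref{6.4} gives $V_j\le CA\,\dfrac{A^{j}}{K}\bigl(V_{j-1}^{\beta_1}+V_{j-1}^{\beta_2}\bigr)$, so $(V_j)$ satisfies a recursion of exactly the same shape, merely with the constant $CA$ in place of $C$, while $V_0=U_1\le \dfrac{CA}{K}\bigl(C^{\beta_1}+C^{\beta_2}\bigr)\le1$ as soon as $K\ge K_1:=CA\bigl(C^{\beta_1}+C^{\beta_2}\bigr)$. Hence it suffices to handle a sequence whose initial term is $\le1$.

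Next I would put $\alpha:=\beta_1-1>0$ and choose $\theta:=\min\bigl\{\tfrac12,\,A^{-1/\alpha}\bigr\}\in(0,1)$, which makes $A\theta^{\alpha}\le1$, and prove by induction that $V_j\le\theta^{j}$ for every $j\ge0$. The base case $j=0$ is the reduction just performed. For the inductive step, assuming $V_{j-1}\le\theta^{j-1}\le1$, the bound $V_{j-1}\le1$ together with $\beta_2>\beta_1$ yields $V_{j-1}^{\beta_2}\le V_{j-1}^{\beta_1}$, so $V_j\le \dfrac{2CA}{K}\,A^{j}\theta^{(j-1)\beta_1}$; since $(j-1)\beta_1-j=j\alpha-\beta_1$, the right-hand side equals $\dfrac{2CA}{K}\,\theta^{-\beta_1}\bigl(A\theta^{\alpha}\bigr)^{j}\theta^{j}$, and because $A\theta^{\alpha}\le1$ this is $\le\theta^{j}$ whenever $K\ge K_2:=2CA\,\theta^{-\beta_1}$. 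The same estimate shows $V_j\le\theta^{j}\le1$, so the induction closes.

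Combining the two steps, the lemma holds with $K_0:=\max\{K_1,K_2\}$: for any $K>K_0$ one gets $U_{j+1}=V_j\le\theta^{j}$ for all $j\ge0$, hence $U_k\to0$ as $k\to\infty$. The single point that requires a little care is to make the threshold produced by the index shift compatible with the smallness needed for the geometric comparison — which is precisely why both thresholds are absorbed into one constant $K_0$ — but beyond this the proof is just elementary manipulation of exponents, and I do not foresee any real difficulty.
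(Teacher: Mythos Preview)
Your proof is correct. The paper does not supply its own proof of this lemma; it simply quotes the result from Mellet--Vasseur \cite{Vasseur  Lp estimates} and uses it as a black box in the De~Giorgi iteration of Section~4. Your argument is the standard fast-geometric-convergence scheme and would serve perfectly well as a self-contained justification: the index shift to force the first term below~$1$, the choice $\theta=\min\{1/2,A^{-1/(\beta_1-1)}\}$ so that $A\theta^{\beta_1-1}\le1$, and the induction $V_j\le\theta^j$ are all carried out correctly, with the two thresholds on $K$ properly combined at the end.
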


\begin{lemma}\label{6.20.}{\rm (\cite{Feireisl Dynamics 2004})}
Let $\varrho$ be a non-negative function such that
\begin{equation}\label{4.17}
0<M_1\leq \int_\Omega \varrho dx, \int_\Omega \varrho^\gamma dx  \leq M_2,  \,\,{\rm with}\,\,\gamma>\frac{6}{5}.
\end{equation}
Then there exists a positive constant $C$ depending only on $M_1$, $M_2$ such that
\begin{equation}\label{4.18}
\|\mathbf{v}\|_{H^{1}(\Omega)}\leq C\left(\|\nabla\mathbf{v}\|_{L^2(\Omega)}+\int_\Omega \varrho|\mathbf{v}| dx\right).
\end{equation}
\end{lemma}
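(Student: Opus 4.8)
The plan is to prove the inequality by a compactness–contradiction argument in the spirit of Lions and Feireisl, letting both the function $\mathbf{v}$ and the weight $\varrho$ vary so as to expose the uniformity of the constant. Suppose the claim fails over the admissible class of weights. Then I would produce sequences $\varrho_n\ge 0$ with $M_1\le\int_\Omega\varrho_n\,dx$ and $\int_\Omega\varrho_n^\gamma\,dx\le M_2$, and $\mathbf{v}_n\in H^1(\Omega)$ normalized by $\|\mathbf{v}_n\|_{H^1(\Omega)}=1$, for which
\[
\|\nabla\mathbf{v}_n\|_{L^2(\Omega)}+\int_\Omega\varrho_n|\mathbf{v}_n|\,dx\longrightarrow 0\qquad\text{as }n\to\infty .
\]
In particular $\nabla\mathbf{v}_n\to 0$ in $L^2(\Omega)$, and since $\|\mathbf{v}_n\|_{H^1(\Omega)}^2=\|\mathbf{v}_n\|_{L^2(\Omega)}^2+\|\nabla\mathbf{v}_n\|_{L^2(\Omega)}^2=1$ this forces $\|\mathbf{v}_n\|_{L^2(\Omega)}\to 1$.

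Next I would extract convergent subsequences. By reflexivity of $H^1(\Omega)$ and the Rellich–Kondrachov theorem, along a subsequence $\mathbf{v}_n\rightharpoonup\mathbf{v}$ in $H^1(\Omega)$ and $\mathbf{v}_n\to\mathbf{v}$ strongly in $L^q(\Omega)$ for every $q\in[1,6)$. Since $\nabla\mathbf{v}_n\to 0$ in $L^2(\Omega)$, the limit obeys $\nabla\mathbf{v}=0$; because $\Omega$ is connected, $\mathbf{v}\equiv\mathbf{c}$ is a constant vector, and $\|\mathbf{v}\|_{L^2(\Omega)}=1$ gives $|\mathbf{c}|^2|\Omega|=1$, hence $\mathbf{c}\neq 0$. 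For the weights, the uniform bound in $L^\gamma(\Omega)$ with $\gamma>1$ yields, after a further subsequence, $\varrho_n\rightharpoonup\varrho$ weakly in $L^\gamma(\Omega)$ with $\varrho\ge 0$ a.e.; testing this weak convergence against the constant function $1\in L^{\gamma'}(\Omega)$ gives $\int_\Omega\varrho\,dx=\lim_n\int_\Omega\varrho_n\,dx\ge M_1>0$.

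The decisive step is to pass to the limit in the product $\int_\Omega\varrho_n|\mathbf{v}_n|\,dx$, and this is exactly where the hypothesis $\gamma>\tfrac65$ enters: it is equivalent to $\gamma'=\gamma/(\gamma-1)<6$, so the embedding $H^1(\Omega)\hookrightarrow L^{\gamma'}(\Omega)$ is compact, and therefore $|\mathbf{v}_n|\to|\mathbf{c}|$ strongly in $L^{\gamma'}(\Omega)$ (using $\bigl||\mathbf{v}_n|-|\mathbf{c}|\bigr|\le|\mathbf{v}_n-\mathbf{c}|$). Pairing the weak $L^\gamma$ convergence of $\varrho_n$ with the strong $L^{\gamma'}$ convergence of $|\mathbf{v}_n|$ gives
\[
\int_\Omega\varrho_n|\mathbf{v}_n|\,dx\longrightarrow\int_\Omega\varrho\,|\mathbf{c}|\,dx=|\mathbf{c}|\int_\Omega\varrho\,dx\ge|\mathbf{c}|\,M_1>0,
\]
which contradicts $\int_\Omega\varrho_n|\mathbf{v}_n|\,dx\to 0$. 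This contradiction establishes the inequality; since the contradiction was derived purely from the bounds $M_1,M_2$ (and the fixed domain $\Omega$), the resulting constant $C$ depends only on $M_1$ and $M_2$, not on the particular admissible $\varrho$. The only point requiring care is the weak–strong pairing in the last display, and it is precisely the exponent condition $\gamma>\tfrac65$ that makes the relevant Sobolev embedding compact, so no further obstacle arises.
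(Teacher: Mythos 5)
Your proof is correct and complete: the normalization, the extraction of a constant nonzero weak limit $\mathbf{c}$, and the weak--strong pairing of $\varrho_n\rightharpoonup\varrho$ in $L^\gamma$ against $|\mathbf{v}_n|\to|\mathbf{c}|$ in $L^{\gamma'}$ (which is exactly where $\gamma>6/5$, i.e.\ $\gamma'<6$, is needed) are all in order. The paper itself offers no proof --- it only cites Feireisl's monograph --- and your compactness--contradiction argument is essentially the standard one given there, so there is nothing to add.
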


Our proof is in the spirit of the work of Mellet and Vasseur \cite{Vasseur temperature}, where
they obtained a bound from below for the temperature in the compressible Navier-Stokes equations by De Giorgi's method. Since the case that the pressure $p$ satisfies the second kind of conditions, that is, \eqref{1.30} and \eqref{1.310} has been included in \cite{Vasseur temperature}, we only give the proof of the case that $p$ satisfies \eqref{1.2900},\eqref{1.30} and \eqref{1.31}.

\begin{proof}[Proof of Proposition \ref{6.1.}]
For clarity, we divide the proof into four steps.

\underline{\bf{Step 1.}}
According to the renormalized temperature inequality \eqref{3-11}, $(\varrho_\eta, \mathbf{u}_\eta, \vartheta_\eta)$ satisfies
\begin{equation}\label{6.700}
\begin{split}
&\partial_t((\delta+\varrho_\eta) H(\vartheta_\eta))+{\rm div}(\varrho_\eta\mathbf{u}_\eta H(\vartheta_\eta))-\Delta\mathcal{K}_h(\vartheta_\eta)
-h'(\vartheta_\eta)\kappa(\vartheta_\eta)|\nabla\vartheta_\eta|^2\\
&\leq\delta\vartheta_\eta^3h(\vartheta_\eta)
-(1-\delta)\mathbb{S}_\eta:\nabla\mathbf{u}_\eta h(\vartheta_\eta)
+\vartheta_\eta p_\vartheta(\varrho_\eta){\rm div}\mathbf{u}_\eta h(\vartheta_\eta),
\end{split}
\end{equation}
in $\mathcal{D}'((0,T)\times\Omega)$, where $H(\vartheta)=-\int_0^\vartheta h(z)dz$ and $\mathcal{K}_h(\vartheta)=-\int_0^\vartheta \kappa(z)h(z) dz$, with the non-increasing $h\in C^2([0,\infty))$ satisfying
\begin{equation*}
0<h(0)<\infty,\,\,\lim_{z\rightarrow\infty}h(z)=0,\\
\end{equation*}
and
\begin{equation*}
h''(z)h(z)\geq 2(h'(z))^2\,\,for \,\,all\,\,z\geq0.
\end{equation*}
In particular, the function $h(z)=\frac{1}{z+\omega}1_{\{z+\omega\leq C\}}$ with $\omega>0$ satisfies all the conditions, thus the inequality \eqref{6.700} holds with
\begin{equation}\label{6.7}
H(\vartheta_\eta)=
\begin{cases}
-\ln(\vartheta_\eta+\omega)+\ln\omega,\,\,&{\rm if} \,\,\vartheta_\eta+\omega \leq C,\\
-\ln C +\ln \omega,\,\,&{\rm if} \,\,\vartheta_\eta+\omega > C.\\
\end{cases}
\end{equation}
Taking
\begin{equation}\label{}
\phi(\vartheta_\eta)=H(\vartheta_\eta)+\ln C-\ln \omega=\left[\ln\left(\frac{C}{\vartheta_\eta+\omega}\right)\right]_{+},
\end{equation}
we deduce for any $0\leq s\leq t\leq T$
\begin{equation}\label{6.8}
\begin{split}
&\int_\Omega \left((\delta+\varrho_\eta)\phi(\vartheta_\eta)\right)(t)dx
-2(1-\delta)\int_s^t\int_\Omega \mu(\vartheta_\eta)
|D(\mathbf{u}_\eta)|^2\phi'(\vartheta_\eta)dxd\tau\\
&\quad-(1-\delta)\int_s^t\int_\Omega \lambda(\vartheta_\eta)
|{\rm div}\mathbf{u}_\eta|^2\phi'(\vartheta_\eta)dxd\tau+\int_s^t\int_\Omega \phi''(\vartheta_\eta)\kappa(\vartheta_\eta)|\nabla\vartheta_\eta|^2dxd\tau\\
&\leq \int_\Omega \left((\delta+\varrho_\eta)\phi(\vartheta_\eta)\right)(s)dx
-\delta\int_s^t\int_\Omega\vartheta_\eta^3 \phi'(\vartheta_\eta) dxd\tau\\
&\quad -\int_s^t\int_\Omega \vartheta_\eta p_\vartheta(\varrho_\eta){\rm div}\mathbf{u}_\eta\phi'(\vartheta_\eta)dxd\tau,
\end{split}
\end{equation}
where $D(\mathbf{u})=\frac{1}{2}\left(\nabla\mathbf{u}+\nabla^T\mathbf{u}\right)$.

Now, introducing a sequence of real numbers
\begin{equation}\label{6.9}
C_k=e^{-M[1-2^{-k}]}\quad \text{for all positive integers }  k,
\end{equation}
where $M$ is a positive number to be chosen later. Define $\phi_{k,\omega}$ by
\begin{equation}\label{6.10}
\phi_{k,\omega}(\vartheta_\eta)=\left[{\rm ln}\left(\frac{C_k}{\vartheta_\eta+\omega}\right)\right]_{+},
\end{equation}
it is easy to check that
\begin{equation}\label{6.11}
\phi_{k,\omega}'(\vartheta_\eta)=-\frac{1}{\vartheta_\eta+\omega} 1_{\{\vartheta_\eta+\omega\leq C_k\}},
\end{equation}
\begin{equation}\label{6.12}
\phi_{k,\omega}''(\vartheta_\eta)\geq \frac{1}{(\vartheta_\eta+\omega)^2} 1_{\{\vartheta_\eta+\omega\leq C_k\}}.
\end{equation}
Next define $U_{k,\,\omega}$ by
\begin{small}
\begin{equation}\label{6.13}
\begin{split}
U_{k,\,\omega}&:=\sup_{T_k\leq t\leq T}\left(\int_\Omega (\delta+\varrho_\eta) \phi_{k,\omega}(\vartheta_\eta)dx\right)\\
 &\quad\quad\quad +(1-\delta)\int_{T_k}^T\int_\Omega\frac{\nu(\vartheta_\eta)}{\vartheta_\eta+\omega}
1_{\{\vartheta_\eta+\omega\leq C_k\}}|D(\mathbf{u}_\eta)|^2 dxdt\\
&\quad\quad\quad +\int_{T_k}^T\int_\Omega\frac{\kappa(\vartheta_\eta)}{(\vartheta_\eta+\omega)^2}
1_{\{\vartheta_\eta+\omega\leq C_k\}}|\nabla\vartheta_\eta|^2 dxdt,
\end{split}
\end{equation}
\end{small}where $\{T_k\}$ is a sequence of non-negative numbers.
Note that $U_{k,\omega}$ depends on $\eta$, $\delta$ and $\omega$, that is, $U_{k,\,\omega}=U_{k, \eta, \delta, \omega}$, and for convenience, we
still write it as $U_{k,\omega}$.

If $T_k=0$ for all $k\in \mathbb{N}$, by \eqref{6.8} and \eqref{6.13}, then we claim that
\begin{equation}\label{6.14}
\begin{split}
U_{k,\omega}\leq &\int_\Omega \left(\delta+\varrho_{0,\delta}\right)\phi_{k,\omega}(\vartheta_{0,\delta})dx
+\delta\int_0^T\int_\Omega \frac{\vartheta_\eta^3}{\vartheta_\eta+\omega}1_{\{\vartheta_\eta+\omega\leq C_k\}}dxdt\\
&+\int_0^T\int_\Omega \frac{\vartheta_\eta}{\vartheta_\eta+\omega}1_{\{\vartheta_\eta+\omega\leq C_k\}}p_\vartheta(\varrho_\eta)|{\rm div}\mathbf{u}_\eta|dxdt.
\end{split}
\end{equation}
In fact, taking $0\leq T_{k-1}\leq s\leq T_{k}\leq t\leq T$ in \eqref{6.8} and combining \eqref{6.11}, \eqref{6.12} with \eqref{1.33}, we have
\begin{equation}\label{6.16}
\begin{split}
&\int_\Omega \left((\delta+\varrho_\eta) \phi_{k,\omega}(\vartheta_\eta)\right)(t)dx\\
&+(1-\delta)\int_{T_k}^t\int_\Omega\frac{\nu(\vartheta_\eta)}{\vartheta_\eta+\omega}
1_{\{\vartheta_\eta+\omega\leq C_k\}}|D(\mathbf{u}_\eta)|^2 dxd\tau\\
& +\int_{T_k}^t\int_\Omega\frac{\kappa(\vartheta_\eta)}{(\vartheta_\eta+\omega)^2}
1_{\{\vartheta_\eta+\omega\leq C_k\}}|\nabla\vartheta_\eta|^2 dxd\tau\\
&\leq \int_\Omega \left((\delta+\varrho_\eta) \phi_{k,\omega}(\vartheta_\eta)\right)(s)dx
+\delta\int_{T_{k-1}}^T\int_\Omega \frac{\vartheta_\eta^3}{\vartheta_\eta+\omega}1_{\{\vartheta_\eta+\omega\leq C_k\}}dxd\tau\\
&+\int_{T_{k-1}}^T\int_\Omega \frac{\vartheta_\eta}{\vartheta_\eta+\omega}1_{\{\vartheta_\eta+\omega\leq C_k\}}p_\vartheta(\varrho_\eta)|{\rm div}\mathbf{u}_\eta|dxdt.
\end{split}
\end{equation}
Taking the supremum over $t\in[T_k, T]$ on both sides of \eqref{6.16}, one deduces that
\begin{equation*}\label{6.17}
\begin{split}
U_{k,\omega} &\leq \int_\Omega \left((\delta+\varrho_\eta) \phi_{k,\omega}(\vartheta_\eta)\right)(s)dx
+\delta\int_{T_{k-1}}^T\int_\Omega \frac{\vartheta_\eta^3}{\vartheta_\eta+\omega}1_{\{\vartheta_\eta+\omega\leq C_k\}}dxdt\\
&+\int_{T_{k-1}}^T\int_\Omega \frac{\vartheta_\eta}{\vartheta_\eta+\omega}1_{\{\vartheta_\eta+\omega\leq C_k\}}p_\vartheta(\varrho_\eta)|{\rm div}\mathbf{u}_\eta|dxdt,
\end{split}
\end{equation*}
which implies \eqref{6.14} provided $T_k=0$ for all $k\in N$.

\underline{\bf{Step 2.}}
In this step, we prove that the second term on the right-hand side of \eqref{6.14} can be controlled by $U_{k-1,\omega}^{\sigma}$ for some $\sigma>1$. More precisely, we have
\begin{equation}\label{6.18}
\begin{split}
&\delta\int_0^T\int_\Omega \frac{\vartheta_\eta^3}{\vartheta_\eta+\omega}1_{\{\vartheta_\eta+\omega\leq C_k\}}dxdt
\leq C\frac{2^{k\alpha}}{M^{\alpha}}U_{k-1,\omega}^{\sigma},
\end{split}
\end{equation}
for some $\sigma>1$ and $\alpha>1$, where the constant $C$ is independent of $\eta,\delta>0$.

Indeed, if $\vartheta_\eta+\omega\leq C_k$, then for any $\omega>0$
\begin{equation}\label{6.19}
\frac{\vartheta_\eta^3}{\vartheta_\eta+\omega}\leq 1
\end{equation}
 by taking $M$ large enough such that $C_k$ is small enough, and
\begin{equation*}
\phi_{k-1,\omega}(\vartheta_\eta)=\left[{\rm ln}\left(\frac{C_{k-1}}{\vartheta_\eta+\omega}\right)\right]_{+}
\geq {\rm ln}\frac{C_{k-1}}{C_k},
\end{equation*}
which implies
\begin{equation}\label{6.21}
1_{\{\vartheta_\eta+\omega\leq C_k\}}\leq \left[{\rm ln}\frac{C_{k-1}}{C_k}\right]^{-\alpha}\phi_{k-1,\omega}(\vartheta_\eta)^{\alpha}, \,\,{\rm for \,\,any}\,\,\alpha>0.
\end{equation}
Combining \eqref{6.19} with \eqref{6.21}, we have
\begin{equation}\label{6.22}
\begin{split}
&\delta\int_0^T\int_\Omega \frac{\vartheta_\eta^3}{\vartheta_\eta+\omega}1_{\{\vartheta_\eta+\omega\leq C_k\}}dxdt\\
&\leq \delta^{1-\beta}\left[{\rm ln}\frac{C_{k-1}}{C_k}\right]^{-\alpha}\int_0^T\int_\Omega (\delta+\varrho_\eta)^\beta
\phi_{k-1,\omega}(\vartheta_\eta)^{\alpha}dxdt\\
&\leq C \delta^{1-\beta}\left[{\rm ln}\frac{C_{k-1}}{C_k}\right]^{-\alpha}T^{1/p'}|\Omega|^{1/q'}
\|(\delta+\varrho_\eta)^\beta\phi_{k-1,\omega}(\vartheta_\eta)^{\alpha}\|_{L^{p}(0,T; L^{q}(\Omega))},\\
\end{split}
\end{equation}
with $\frac{1}{p}+\frac{1}{p'}=1$ and $\frac{1}{q}+\frac{1}{q'}=1$. By Lemma \ref{6.20.} and the growth restriction imposed on $\kappa(\vartheta)$ \eqref{1.32}, the last term on the right-hand side of \eqref{6.22} can be controlled by
\begin{equation}\label{6.23}
\begin{split}
&\|(\delta+\varrho_\eta)^\beta\phi_{k-1,\omega}(\vartheta_\eta)^{\alpha}\|_{L^{p}(0,T; L^{q}(\Omega))}\\
&= \|\left((\delta+\varrho_\eta)\phi_{k-1,\omega}(\vartheta_\eta)\right)^{\beta/\alpha}
\phi_{k-1,\omega}(\vartheta_\eta)^{1-\beta/\alpha}\|_{L^{p \alpha}(0,T; L^{q \alpha}(\Omega))}^{\alpha}\\
&\leq \|\left((\delta+\varrho_\eta)\phi_{k-1,\omega}(\vartheta_\eta)\right)^{\beta/\alpha}\|
_{L^\infty(0,T;L^{\alpha/\beta}(\Omega))}^{\alpha}\\
&\quad\quad\quad\quad \|\phi_{k-1,\omega}(\vartheta_\eta)^{1-\beta/\alpha}\|_{L^{\frac{2}{1-\beta/\alpha}}(0,T; L^{\frac{6}{1-\beta/\alpha}}(\Omega))}^{\alpha}\\
&= \|(\delta+\varrho_\eta)\phi_{k-1,\omega}(\vartheta_\eta)\|^{\beta}_{L^\infty(0,T;L^1(\Omega))}
\|\phi_{k-1,\omega}(\vartheta_\eta)\|_{L^2(0,T; L^6(\Omega))}^{\alpha-\beta}\\
&\leq \|(\delta+\varrho_\eta)\phi_{k-1,\omega}(\vartheta_\eta)\|^{\beta}
_{L^\infty(0,T;L^1(\Omega))}\\
&\quad\quad\quad\quad     \left(\|\varrho_\eta \phi_{k-1,\omega}(\vartheta_\eta)\|_{L^\infty(0,T;L^1(\Omega))}
+\|\nabla\phi_{k-1,\omega}(\vartheta_\eta)\|_{L^2((0,T)\times\Omega)}\right)^{\alpha-\beta}\\
&\leq C U_{k-1,\omega}^{\beta}\left(U_{k-1,\omega}+U_{k-1,\omega}^{1/2}\right)^{\alpha-\beta}\\
&\leq C \left(U_{k-1,\omega}^{\alpha}+U_{k-1,\omega}^{\frac{\alpha+\beta}{2}}\right),
\end{split}
\end{equation}
with the coefficients $p$, $q$, $\alpha$ and $\beta$ satisfying
\begin{equation*}
\frac{1}{p \alpha}=\frac{1-\beta/\alpha}{2},\;
\frac{1}{q \alpha}=\frac{\beta}{\alpha}+\frac{1-\beta/\alpha}{6},
\end{equation*}
or equivalently,
\begin{equation*}
p=\frac{2}{\alpha-\beta},\;q=\frac{6}{\alpha+5\beta}.
\end{equation*}
Substituting \eqref{6.23} into \eqref{6.22}, we deduce
\begin{equation}\label{6.35}
\begin{split}
\delta\int_0^T\int_\Omega \frac{\vartheta_\eta^3}{\vartheta_\eta+\omega}1_{\{\vartheta_\eta+\omega\leq C_k\}}dxdt
\leq C \delta^{1-\beta}\left[{\rm ln}\frac{C_{k-1}}{C_k}\right]^{-\alpha}\left(U_{k-1,\omega}^{\alpha}
+U_{k-1,\omega}^{\frac{\alpha+\beta}{2}}\right).
\end{split}
\end{equation}
By \eqref{6.9}, we have
\begin{equation}\label{6.39}
\left[{\rm ln}\frac{C_{k-1}}{C_k}\right]^{-\alpha}=\frac{2^{k\alpha}}{M^{\alpha}}.
\end{equation}
Moreover, we can choose $\alpha>1$ and $0<\beta<1$ such that
\begin{equation}\label{6.400}
\sigma:=\min\left(\frac{\alpha+\beta}{2},\alpha\right)>1,
\end{equation}
and
\begin{equation}\label{6.401}
\delta^{1-\beta}\leq1.
\end{equation}
Combining \eqref{6.35}-\eqref{6.401} together, we obtain \eqref{6.18}.

\underline{\bf{Step 3.}}
In this step, we prove that the last term on the right-hand side of \eqref{6.14} can be controlled by $U_{k-1,\omega}^{\sigma}$ for the same $\sigma>1$ as Step 2.
To be precise, we have
\begin{equation}\label{6-01}
\begin{split}
&\int_0^T\int_\Omega \frac{\vartheta_\eta}{\vartheta_\eta+\omega}1_{\{\vartheta_\eta+\omega\leq C_k\}}p_\vartheta(\varrho_\eta)|{\rm div}\mathbf{u}_\eta|dxdt
\leq \frac12 U_{k,\omega}+C\frac{2^{k\alpha}}{M^{\alpha}}U_{k-1,\omega}^{\sigma},
\end{split}
\end{equation}
for the same $\sigma>1$ and $\alpha>1$ as Step 2, where the constant $C$ is independent of $\eta,\delta>0$.

In fact, by Cauchy-Schwarz and Young's inequality, we have
\begin{equation}\label{6-1}
\begin{split}
&\int_0^T\int_\Omega \frac{\vartheta_\eta}{\vartheta_\eta+\omega}1_{\{\vartheta_\eta+\omega\leq C_k\}}p_\vartheta(\varrho_\eta)|{\rm div}\mathbf{u}_\eta|dxdt\\
&\leq \frac18\int_0^T\int_\Omega \frac{\nu(\vartheta_\eta)}{\vartheta_\eta+\omega}1_{\{\vartheta_\eta+\omega\leq C_k\}}|D(\mathbf{u}_\eta)|^2 dxdt\\
&\quad+C\int_0^T\int_\Omega\frac{\vartheta_\eta}{\nu(\vartheta_\eta)}1_{\{\vartheta_\eta+\omega\leq C_k\}}|p_\vartheta(\varrho_\eta)|^2 dxdt\\
&\leq\frac12 U_{k,\omega}
+C\int_0^T\int_\Omega 1_{\{\vartheta_\eta+\omega\leq C_k\}}|p_\vartheta(\varrho_\eta)|^2 dxdt,
\end{split}
\end{equation}
where we have used \eqref{1.330} in the last inequality.
Then, by \eqref{6.21}, the last term on the right-hand side of \eqref{6-1} can be controlled by
\begin{equation}\label{6-2}
\begin{split}
&\int_0^T\int_\Omega 1_{\{\vartheta_\eta+\omega\leq C_k\}}|p_\vartheta(\varrho_\eta)|^2 dxdt\\
&\leq \left[{\rm ln}\frac{C_{k-1}}{C_k}\right]^{-\alpha}
\int_0^T\int_\Omega \phi_{k-1,\omega}(\vartheta_\eta)^{\alpha}(\delta+\varrho_\eta)^\beta
(\delta+\varrho_\eta)^{-\beta}|p_\vartheta(\varrho_\eta)|^2 dxdt\\
&\leq \left[{\rm ln}\frac{C_{k-1}}{C_k}\right]^{-\alpha}
\|(\delta+\varrho_\eta)^\beta\phi_{k-1,\omega}(\vartheta_\eta)^{\alpha}\|_{L^p(0,T;L^q(\Omega))}\\
&\quad\quad\quad\quad\quad\quad\quad \|(\delta+\varrho_\eta)^{-\beta}(p_\vartheta(\varrho_\eta))^2\|_{L^{p'}(0,T;L^{q'}(\Omega))}
\end{split}
\end{equation}
with $\frac{1}{p}+\frac{1}{p'}=1$ and $\frac{1}{q}+\frac{1}{q'}=1$,
where in accordance with \eqref{6.23}, the second term on the right-hand side can be controlled by
\begin{equation}\label{6-3}
\|(\delta+\varrho_\eta)^\beta\phi_{k-1,\omega}(\vartheta_\eta)^{\alpha}\|_{L^{p}(0,T; L^{q}(\Omega))}
\leq CU_{k-1,\omega}^{\sigma},\,{\rm with\,}\sigma>1,
\end{equation}
provided $\sigma=\min\{\frac{\alpha+\beta}{2}, \alpha\}>1$ and
the coefficients $p$, $q$, $\alpha$ and $\beta$ satisfy
\begin{equation*}
p=\frac{2}{\alpha-\beta},\quad q=\frac{6}{\alpha+5\beta}.
\end{equation*}
Thus, to complete the proof of \eqref{6-01}, it suffices to prove
\begin{equation}\label{6-4}
\|(\delta+\varrho_\eta)^{-\beta}(p_\vartheta(\varrho_\eta))^2\|_{L^{p'}(0,T;L^{q'}(\Omega))}\leq C.
\end{equation}
Note that $\frac{\alpha+\beta}{2}>1$, $\alpha>1$ and $0<\beta<1$ implies
\begin{equation*}
p<\frac{1}{1-\beta},\quad q<\frac{3}{1+2\beta},
\end{equation*}
which means
\begin{equation*}
p'>\frac{1}{\beta}, \quad q'>\frac{3}{2(1-\beta)}.
\end{equation*}
To prove \eqref{6-4}, it is enough to prove
\begin{equation}\label{6-5}
\begin{split}
&\|(\delta+\varrho_\eta)^{-\beta}(p_\vartheta(\varrho_\eta))^2\|_{L^{p'}(0,T;L^{q'}(\Omega))}\\
&\leq C \|(\delta+\varrho_\eta)^{-\beta}(1+\varrho^{\gamma/3})^2\|_{L^{p'}(0,T;L^{q'}(\Omega))}\\
&\leq C \|(\delta+\varrho_\eta)^{\frac{2\gamma}{3}-\beta}\|_{L^{p'}(0,T;L^{q'}(\Omega))}\\
&=C\|\delta+\varrho_\eta\|_{L^{(\frac{2\gamma}{3}-\beta) p'}(0,T;L^{(\frac{2\gamma}{3}-\beta) q'}(\Omega))}^{\frac{2\gamma}{3}-\beta}\\
&\leq C,
\end{split}
\end{equation}
which in accordance with $\varrho_\eta\in L^\infty(0,T; L^\gamma(\Omega))$, $\gamma>3/2$ can be achieved by choosing $\beta$ close enough to $0$.

\underline{\bf{Step 4.}}
In this step, we are ready to complete the proof of Proposition \ref{6.1.}.
By virtue of the assumption $\vartheta_{0,\delta}\geq\underline{\vartheta}>0$, we can choose $M$ large enough such that $e^{-M/2}<\underline{\vartheta}$, which implies for any $\omega>0$
\begin{equation}\label{6.37}
\phi_{k,\omega}(\vartheta_{0,\delta})=\left[{\ln}\left(\frac{e^{-M[1-2^{-k}]}}{\vartheta_{0,\delta}+\omega}\right)\right]_{+}=0.
\end{equation}
Substituting \eqref{6.18}, \eqref{6-01} and \eqref{6.37} into \eqref{6.14}, we obtain
\begin{equation}\label{6.402}
U_{k,\omega}\leq C\frac{2^{k\alpha}}{M^{\alpha}}U_{k-1,\omega}^{\sigma}\,\,{\rm with}\,\,\sigma>1.
\end{equation}
Therefore, by Lemma \ref{6.2.}, for M large enough (independently on $\eta$, $\delta$ and $\omega$), we deduce
\begin{equation}\label{6.43}
\lim_{k\rightarrow \infty} U_{k,\omega}=0,
\end{equation}
which with help of \eqref{6.11} and the definition of $U_{k,\omega}$ \eqref{6.13} yields
\begin{equation}\label{6.44}
\int_0^T\int_\Omega \kappa(\vartheta_\eta)\left|\nabla\left[\ln\left(\frac{e^{-M}}{\vartheta_\eta+\omega}\right)\right]_{+}\right|^2 dxdt=0,
\end{equation}
and
\begin{equation}\label{6.45}
\int_\Omega (\delta+\varrho_\eta)\left[\ln\left(\frac{e^{-M}}{\vartheta_\eta+\omega}\right)\right]_{+} dx=0.
\end{equation}
By the growth restriction imposed on $\kappa(\vartheta)$ \eqref{1.32}, \eqref{6.44} implies
\begin{equation}\label{6.46}
\left[\ln\left(\frac{e^{-M}}{\vartheta_\eta+\omega}\right)\right]_{+} \,\,{\rm is\,\,constant \,\,in\,\,\Omega\,\,for \,\,all\,\,}t\in[0,T].
\end{equation}
Furthermore, by \eqref{6.45}, we have
\begin{equation*}\label{6.47}
\left[\ln\left(\frac{e^{-M}}{\vartheta_\eta+\omega}\right)\right]_{+}=0,
\end{equation*}
This yields
\begin{equation*}\label{6.48}
\vartheta_\eta+\omega\geq e^{-M}
\end{equation*}
for any $\omega>0$, which completes our proof.
\end{proof}

In accordance with Proposition \ref{6.1.} and the strictly increasing property of $\mu(\vartheta)$, there exists a constant $\underline{\underline{\mu}}>0$ independent of the positive parameters $\eta$ and $\delta$ such that
\begin{equation*}
\mu(\vartheta_\eta)\geq\underline{\underline{\mu}}>0.
\end{equation*}
To conclude, we have the following proposition.

\begin{proposition}\label{4.1.}
For fixed $\delta>0$, under the hypotheses of Theorem \ref{1.2.},
the initial-boundary value problem \eqref{1.1}, \eqref{1.4} and \eqref{1.3} with the parameter $\delta>0$ admits an
approximate solution $(\varrho, \mathbf{u}, \vartheta)$, which is also the limit of the weak solution
constructed in Proposition \ref{3.1.}
when $\eta\to 0$, satisfying
\begin{enumerate}[(i)]
\item the density $\varrho\geq 0$ satisfies
\begin{equation*}
\varrho\in C([0,T];L^\beta_{weak}(\Omega))\cap L^{\beta+1}((0,T)\times\Omega)
\end{equation*}
and the initial condition \eqref{2-102}. The velocity $\mathbf{u}$ belongs to the space
$L^2(0,T; H^1_0(\Omega))$, and $(\varrho, \mathbf{u})$ solves the continuity equation $\eqref{1.1}_1$ in the sense of distributions;

\item $(\varrho, \mathbf{u}, \vartheta)$ solves a modified momentum equation
\begin{equation}\label{6-1000}
\partial_t(\varrho\mathbf{u})+{\rm div}(\varrho\mathbf{u}\otimes\mathbf{u})+\nabla(p(\varrho,\vartheta)+\delta\varrho^\beta)
={\rm div}\mathbb{S}
\end{equation}
in $\mathcal{D}'((0,T)\times\Omega)$, where the viscous stress tensor $\mathbb{S}$ is given by
\begin{equation*}
\mathbb{S}=\mu(\vartheta)(\nabla\mathbf{u}+\nabla^T\mathbf{u})+\lambda(\vartheta){\rm div}\mathbf{u}\mathbb{I}.
\end{equation*}
Moreover, $\varrho\mathbf{u}\in C([0,T]; L^{\frac{2\gamma}{\gamma+1}}_{weak}(\Omega))$ satisfies the initial condition \eqref{2-202};

\item the temperature $\vartheta\geq 0$ satisfies
\begin{equation*}
\vartheta \in L^3((0,T)\times\Omega),\quad \vartheta^{\frac{3-\omega}{2}}\in L^2(0,T;H^1(\Omega)),\,\,\omega\in(0,1),
\end{equation*}
and the initial condition \eqref{2-302} is satisfied in the sense of distributions.
Furthermore, the renormalized temperature inequality holds in $\mathcal{D}'((0,T)\times\Omega)$, that is,
\begin{equation}\label{6-110}
\begin{split}
&\int_0^T\int_\Omega (\delta+\varrho)H(\vartheta)\partial_t\varphi dxdt\\
&\quad+\int_0^T\int_\Omega \left(\varrho H(\vartheta)\mathbf{u}\cdot\nabla\varphi
+\mathcal{K}_h(\vartheta)\Delta\varphi-\delta\vartheta^3 h(\vartheta)\varphi\right) dxdt\\
&\leq\int_0^T\int_\Omega \left((\delta-1)\mathbb{S}:\nabla\mathbf{u}h(\vartheta)+h'(\vartheta)\kappa(\vartheta)
|\nabla\vartheta|^2\right)\varphi dxdt\\
&\quad+\int_0^T\int_\Omega h(\vartheta)\vartheta p_\vartheta(\varrho){\rm div}\mathbf{u}\varphi dxdt -\int_\Omega(\delta+\varrho_{0,\delta})H(\vartheta_{0,\delta})\varphi(0)dx
\end{split}
\end{equation}
for any $\varphi\in C_c^\infty([0,T]\times\Omega)$ satisfying
\begin{equation*}
\varphi\geq 0,\,\,\varphi(T,\cdot)=0,\,\,\nabla\varphi\cdot \mathbf{n}|_{\partial\Omega}=0,
\end{equation*}
where $H(\vartheta)=\int_0^\vartheta h(z) dz$ and $\mathcal{K}_h(\vartheta)=\int_0^\vartheta \kappa(z)h(z) dz$,
with the non-increasing $h\in C^2([0,\infty))$ satisfying
\begin{equation*}
0<h(0)<\infty,\,\,\lim_{z\rightarrow\infty}h(z)=0,\\
\end{equation*}
and
\begin{equation*}
h''(z)h(z)\geq 2(h'(z))^2\,\,for \,\,all\,\,z\geq0;
\end{equation*}

\item the energy inequality
\begin{equation}\label{6-120}
\begin{split}
&\int_0^T\int_\Omega(-\partial_t\psi)
\left(\frac{1}{2}\varrho|\mathbf{u}|^2+\varrho P_e(\varrho)+\frac{\delta}{\beta-1}\varrho^\beta
+(\delta+\varrho)\vartheta \right)dxdt\\
&\quad+\int_0^T\int_\Omega \psi \delta\left(\mathbb{S}:\nabla\mathbf{u}+ \vartheta^3\right) dxdt\\
&\leq\int_\Omega \left(\frac{1}{2}\frac{|\mathbf{m}_{0}|^2}{\varrho_{0,\delta}}
+\frac{\delta}{\beta-1}\varrho_{0,\delta}^\beta
+\varrho_{0,\delta} P_e(\varrho_{0,\delta})
+(\delta+\varrho_{0,\delta})\vartheta_{0,\delta} \right)dx
\end{split}
\end{equation}
holds for any $\psi\in C^\infty([0,T])$ satisfying
\begin{equation*}
\psi(0)=1,\quad \psi(T)=0,\quad \partial_t\psi\leq 0.
\end{equation*}
\end{enumerate}
\end{proposition}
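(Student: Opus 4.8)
The plan is to pass to the limit $\eta\to0$ in the family $(\varrho_\eta,\mathbf{u}_\eta,\vartheta_\eta)$ supplied by Proposition \ref{3.1.}, the decisive new input being the uniform lower bound $\mu(\vartheta_\eta)\geq\underline{\underline{\mu}}>0$ coming from Proposition \ref{6.1.} and the strict monotonicity of $\mu$. The scheme is the classical one: first derive estimates on $(\varrho_\eta,\mathbf{u}_\eta,\vartheta_\eta)$ uniform in $\eta>0$ (this is where the temperature lower bound enters), then extract weak limits and prove the strong convergence of the density by the effective--viscous--flux argument, and finally pass to the limit term by term in \eqref{3-10}, \eqref{3-11} and \eqref{3-12}, using that $\eta\Delta\mathbf{u}_\eta\to0$.

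For the uniform estimates I would start from the energy inequality \eqref{3-12} with $\delta>0$ fixed. Writing $\mathbb{S}_\eta:\nabla\mathbf{u}_\eta=2\mu(\vartheta_\eta)|D_0(\mathbf{u}_\eta)|^2+\tfrac13\bigl(2\mu(\vartheta_\eta)+3\lambda(\vartheta_\eta)\bigr)|{\rm div}\,\mathbf{u}_\eta|^2$, where $D_0$ denotes the trace--free part of the symmetric gradient, and using $\mu(\vartheta_\eta)\geq\underline{\underline{\mu}}$, $2\mu+3\lambda\geq\nu>0$ and Korn's inequality in $\mathbb{R}^3$, one gets $\int_\Omega\mathbb{S}_\eta:\nabla\mathbf{u}_\eta\,dx\geq c\,\|\nabla\mathbf{u}_\eta\|_{L^2(\Omega)}^2$ with $c>0$ independent of $\eta$; note that, unlike in the proof of \eqref{3-11}, here assumption \eqref{1.330} is not needed because $\vartheta_\eta\geq\widetilde\vartheta>0$ already. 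Hence \eqref{3-12} yields, uniformly in $\eta$,
\[
\varrho_\eta\ \text{bounded in}\ L^\infty(0,T;L^\beta(\Omega)),\quad \sqrt{\varrho_\eta}\,\mathbf{u}_\eta\ \text{in}\ L^\infty(0,T;L^2(\Omega)),\quad \nabla\mathbf{u}_\eta\ \text{in}\ L^2((0,T)\times\Omega),
\]
together with $(\delta+\varrho_\eta)\vartheta_\eta$ bounded in $L^\infty(0,T;L^1(\Omega))$, $\vartheta_\eta$ in $L^3((0,T)\times\Omega)$, and $\sqrt{\eta}\,\nabla\mathbf{u}_\eta$ in $L^2((0,T)\times\Omega)$, the last of which gives $\eta\Delta\mathbf{u}_\eta\to0$ strongly in $L^2(0,T;H^{-1}(\Omega))$. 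Testing \eqref{3-10} against the Bogovskii operator applied to $\varrho_\eta$ minus its spatial mean upgrades the density bound to $\varrho_\eta$ bounded in $L^{\beta+1}((0,T)\times\Omega)$ uniformly in $\eta$, and the renormalized temperature inequality together with the growth hypothesis \eqref{1.32} gives $\vartheta_\eta^{(3-\omega)/2}$ bounded in $L^2(0,T;H^1(\Omega))$. Controlling $\partial_t\varrho_\eta$, $\partial_t(\varrho_\eta\mathbf{u}_\eta)$ and $\partial_t((\delta+\varrho_\eta)\vartheta_\eta)$ in negative norms from the equations, Aubin--Lions arguments then give $\varrho_\eta\to\varrho$ in $C([0,T];L^\beta_{weak}(\Omega))$, $\mathbf{u}_\eta\rightharpoonup\mathbf{u}$ in $L^2(0,T;H^1_0(\Omega))$, $\varrho_\eta\mathbf{u}_\eta\to\varrho\mathbf{u}$ in $C([0,T];L^{2\gamma/(\gamma+1)}_{weak}(\Omega))$, and --- since $\delta+\varrho_\eta\geq\delta>0$ --- the precompactness of $(\delta+\varrho_\eta)\vartheta_\eta$ needed to deduce $\vartheta_\eta\to\vartheta$ a.e. once the density is shown to converge strongly.

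The main obstacle is precisely this last point, the strong convergence $\varrho_\eta\to\varrho$ in $L^1((0,T)\times\Omega)$. Following Section 4 of \cite{Feireisl On the motion 2004}, I would first establish the weak continuity of the effective viscous flux, namely that the weak limits satisfy
\[
\lim_{\eta\to0}\int_0^T\!\!\int_\Omega\varphi\bigl(p(\varrho_\eta,\vartheta_\eta)+\delta\varrho_\eta^\beta-(\lambda(\vartheta_\eta)+2\mu(\vartheta_\eta)){\rm div}\,\mathbf{u}_\eta\bigr)\varrho_\eta\,dxdt=\int_0^T\!\!\int_\Omega\varphi\bigl(\overline{p}+\delta\overline{\varrho^\beta}-(\lambda(\vartheta)+2\mu(\vartheta)){\rm div}\,\mathbf{u}\bigr)\varrho\,dxdt,
\]
the artificial term being harmless since $\eta\Delta\mathbf{u}_\eta\to0$, and then run the oscillation--defect--measure and renormalized--continuity--equation argument to conclude $\overline{\varrho^\beta}=\varrho^\beta$, hence $\varrho_\eta\to\varrho$ strongly. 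It is here that the uniform bound $\mu(\vartheta_\eta)\geq\underline{\underline{\mu}}>0$ is indispensable: it is what lets the commutator lemmas, the compensated compactness argument and the final monotonicity step go through exactly as in \cite{Feireisl On the motion 2004}, and this would fail for a genuinely degenerate $\mu$.

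Once $\varrho_\eta\to\varrho$ and $\vartheta_\eta\to\vartheta$ a.e. (with the equiintegrability supplied by the $L^{\beta+1}$ and $L^3$ bounds), $\mathbf{u}_\eta\rightharpoonup\mathbf{u}$ and $\eta\Delta\mathbf{u}_\eta\to0$, the limit passage is routine. In the continuity equation and in the momentum equation \eqref{3-10} one obtains \eqref{6-1000}, using $\mu(\vartheta_\eta)\to\mu(\vartheta)$ and $\lambda(\vartheta_\eta)\to\lambda(\vartheta)$ strongly in $L^2$ (Lipschitz continuity and dominated convergence) so that $\mathbb{S}_\eta\rightharpoonup\mathbb{S}$, together with the continuity and equiintegrability of $p(\varrho_\eta,\vartheta_\eta)+\delta\varrho_\eta^\beta$. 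In the renormalized temperature inequality \eqref{3-11} the only delicate terms are $(\delta-1)h(\vartheta_\eta)\mathbb{S}_\eta:\nabla\mathbf{u}_\eta$ and $h'(\vartheta_\eta)\kappa(\vartheta_\eta)|\nabla\vartheta_\eta|^2$, which are controlled by the convexity and weak lower semicontinuity of $(\vartheta,\nabla\mathbf{u})\mapsto h(\vartheta)\mathbb{S}:\nabla\mathbf{u}$ (Remark \ref{1.6.}) and the analogous lower semicontinuity for the heat--flux term, exactly as in \cite{Feireisl Dynamics 2004}. Finally, in the energy inequality \eqref{3-12} one discards the nonnegative $\eta|\nabla\mathbf{u}_\eta|^2$, uses weak lower semicontinuity for $\varrho_\eta|\mathbf{u}_\eta|^2$ and $\mathbb{S}_\eta:\nabla\mathbf{u}_\eta$, Fatou for $\vartheta_\eta^3$ and equiintegrable convergence for the remaining terms, and bounds $|\mathbf{m}_{0,\delta}|^2/\varrho_{0,\delta}\leq|\mathbf{m}_0|^2/\varrho_{0,\delta}$ on the right--hand side, arriving at \eqref{6-120}. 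This establishes assertions (i)--(iv) of Proposition \ref{4.1.}.
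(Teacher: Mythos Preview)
Your proposal is correct and follows essentially the same route the paper takes, which after establishing Proposition \ref{6.1.} simply invokes the uniform bound $\mu(\vartheta_\eta)\geq\underline{\underline{\mu}}$ and refers to the machinery of \cite{Feireisl On the motion 2004}; your outline spells out these deferred details (uniform Korn estimate from $\delta\int\mathbb{S}_\eta{:}\nabla\mathbf{u}_\eta$ in \eqref{3-12} with $\delta$ fixed, Bogovskii for $L^{\beta+1}$, effective viscous flux, strong convergence of $\varrho_\eta$ and then of $\vartheta_\eta$, and the lower--semicontinuity passage in the temperature and energy inequalities). One minor remark: since $\varrho_\eta$ is bounded in $L^{\beta+1}$ with $\beta>4$, the limit $\varrho$ lies in $L^2((0,T)\times\Omega)$ and the DiPerna--Lions renormalization applies directly, so the full oscillation--defect--measure machinery is not strictly needed at this stage (it becomes essential only later, when $\delta\to0$).
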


\section{Passing to the limit for $\delta\to0$}\
\setcounter{equation}{0}
The final step is to deal with terms related to the parameter $\delta>0$ in \eqref{6-1000}-\eqref{6-120}. To this end, we denote by $(\varrho_\delta, \mathbf{u}_\delta, \vartheta_\delta)$ the weak solutions constructed in Proposition \ref{4.1.}.
Observe that estimates for $\varrho_\delta$ are similar to the previous sections, and estimates for $\mathbf{u}_\delta$ can be deduced after some calculations, thus our main task in this section is to deal with
terms related to $\vartheta_\delta$.

\subsection{\bf Estimates independent of $\delta>0$}\

For convenience, in the rest of this section, we denote $C$ a generic positive constant independent of $\delta>0$.

First, by \eqref{initial} and the energy inequality \eqref{6-120}, we have the following estimates
\begin{equation}\label{7-1}
\|\sqrt{\varrho_\delta}\mathbf{u}_\delta\|_{L^\infty(0,T; L^2(\Omega))}\leq C,
\end{equation}
\begin{equation}\label{7-2}
\|\varrho_\delta\|_{L^\infty(0,T; L^\gamma(\Omega))}\leq C,
\end{equation}
\begin{equation}\label{7-3}
\|(\delta+\varrho_\delta)\vartheta_\delta\|_{L^\infty(0,T; L^1(\Omega))}\leq C,
\end{equation}
\begin{equation}\label{7-4}
\delta \int_0^T\int_\Omega \mathbb{S}_\delta:\nabla\mathbf{u}_\delta dxdt\leq C,
\end{equation}
\begin{equation}\label{7-5}
\delta \int_0^T\int_\Omega \vartheta_\delta^3 dxdt\leq C.
\end{equation}

Then, taking $\varphi(t,x)=\psi(t)$ satisfying $0\leq\psi\leq1$, $\psi\in C_c^\infty(0,T)$ and $h(\vartheta)=\frac{\xi}{\xi+\vartheta}$ with $0<\xi<1$ in \eqref{6-110}, we have
\begin{equation}\label{7-6}
\begin{split}
&\int_0^T\int_\Omega \left(\frac{1-\delta}{\xi+\vartheta_\delta}\mathbb{S}_\delta:\nabla\mathbf{u}_\delta
+\frac{\kappa(\vartheta_\delta)}{(\xi+\vartheta_\delta)^2}|\nabla\vartheta_\delta|^2\right)\psi
+\varrho_\delta \ln(\xi+\vartheta_\delta)\partial_t\psi dxdt\\
&\leq \delta\int_0^T\int_\Omega \frac{\vartheta_\delta^{3}}{\xi+\vartheta_\delta}\psi dxdt
+\int_0^T\int_\Omega \frac{\vartheta_\delta}{\xi+\vartheta_\delta}p_\vartheta(\varrho_\delta){\rm div}\mathbf{u}_\delta \psi dxdt.
\end{split}
\end{equation}
By virtue of \eqref{7-5}, we take the limit for $\xi\to 0$ in \eqref{7-6} to deduce
\begin{equation}\label{7-7}
\begin{split}
&\int_0^T\int_\Omega \left(\frac{1-\delta}{\vartheta_\delta}\mathbb{S}_\delta:\nabla\mathbf{u}_\delta
+\frac{\kappa(\vartheta_\delta)}{\vartheta_\delta^2}|\nabla\vartheta_\delta|^2\right)\psi
+\varrho_\delta \ln\vartheta_\delta\partial_t\psi dxdt\\
&\leq C\left(1+\int_0^T\int_\Omega p_\vartheta(\varrho_\delta){\rm div}\mathbf{u}_\delta \psi dxdt\right),
\end{split}
\end{equation}
where by the continuity equation $\eqref{1.1}_1$, the last term on the right-hand side can be rewritten as
\begin{equation*}
\int_0^T\int_\Omega p_\vartheta(\varrho_\delta){\rm div}\mathbf{u}_\delta\psi dxdt
=\int_0^T\int_\Omega \varrho_\delta P_\vartheta(\varrho_\delta) \partial_t\psi dxdt,
\end{equation*}
with
\begin{equation*}
P_\vartheta(\varrho)=\int_1^\varrho \frac{p_\vartheta(z)}{z^2} dz.
\end{equation*}
Thus, by assumption \eqref{1.31}, the growth restriction imposed on $\kappa(\vartheta)$ \eqref{1.32} and estimate \eqref{7-2}, we have
\begin{equation*}
\|\varrho_\delta \ln \vartheta_\delta\|_{L^\infty(0,T; L^1(\Omega))}\leq C,
\end{equation*}
\begin{equation*}
\|\nabla \ln \vartheta_\delta\|_{L^2((0,T)\times\Omega)}\leq C,
\end{equation*}
\begin{equation*}
\|\nabla\vartheta_\delta\|_{L^2((0,T)\times\Omega)}\leq C,
\end{equation*}
which combined with Lemma \ref{6.20.} implies
\begin{equation}\label{7-8}
\|\ln \vartheta_\delta\|_{L^2(0,T;H^1(\Omega))}\leq C,
\end{equation}
\begin{equation}\label{7-9}
\|\vartheta_\delta\|_{L^2(0,T;H^1(\Omega))}\leq C.
\end{equation}

Next, taking $h(\vartheta)=\frac{1}{(1+\vartheta)^l}$ with $0<l<1$ in \eqref{6-110}, we obtain
\begin{equation}\label{7-10}
\begin{split}
&\int_0^T\int_\Omega \left(\frac{1-\delta}{(1+\vartheta_\delta)^l}\mathbb{S}_\delta:\nabla\mathbf{u}_\delta
+l\frac{\kappa(\vartheta_\delta)}{(1+\vartheta_\delta)^{l+1}}|\nabla\vartheta_\delta|^2\right)\psi dxdt\\
&\leq \delta\int_0^T\int_\Omega \frac{\vartheta_\delta^{3}}{(1+\vartheta_\delta)^l}\psi dxdt
-\int_0^T\int_\Omega (\delta+\varrho_{\delta})H(\vartheta_{\delta})\partial_t\psi dxdt\\
&+\int_0^T\int_\Omega \frac{\vartheta_\delta}{(1+\vartheta_\delta)^l}p_\vartheta(\varrho_\delta){\rm div}\mathbf{u}_\delta \varphi dxdt,
\end{split}
\end{equation}
with $H(\vartheta)=\int_0^\vartheta \frac{1}{(1+z)^l} dz$.
Letting $l\rightarrow 0$ in \eqref{7-10} and combining with estimates \eqref{7-2}-\eqref{7-5} and \eqref{7-9}, we have
\begin{equation*}
\int_0^T\int_\Omega\mathbb{S}_\delta:\nabla\mathbf{u}_\delta dxdt\leq C,
\end{equation*}
which with help of Proposition \ref{6.1.} and assumptions imposed on $\mu(\vartheta)$ and $\lambda(\vartheta)$ in Theorem \ref{1.2.}, yields
\begin{equation}\label{7-11}
\|\mathbf{u}_\delta\|_{L^2(0,T; H^{1}_0(\Omega))}\leq C.
\end{equation}

Moreover, for fixed $0<l<1$ in \eqref{7-10}, by the growth restriction imposed on $\kappa(\vartheta)$ \eqref{1.32}, we obtain
\begin{equation}\label{7-12}
\|\vartheta_\delta^{\frac{3-l}{2}}\|_{L^2(0,T;H^1(\Omega))}\leq C(l),
\end{equation}
with the constant $C(l)$ depending on $l\in(0,1)$. Combining estimate \eqref{7-12} with \eqref{7-3} and thanks to the interpolation inequality, we deduce for a certain $p>1$ and a small positive number $\omega$
\begin{equation}\label{7-13}
\vartheta_\delta^3\,\,{\rm is \,\,bounded \,\,in}\,\,L^p\left(\{\varrho_\delta(t,x)\geq\omega>0\}\right)
\end{equation}
by a positive constant independent of $\delta>0$.

\subsection{\bf Strong convergence of the temperature $\vartheta_\delta$}\

Following Chapter 7 in \cite{Feireisl Dynamics 2004} and Chapter 5 in \cite{Feireisl On the motion 2004}, to obtain the strong convergence of the temperature
\begin{equation}\label{10-1}
\vartheta_\delta \to \vartheta\quad {\rm in}\,\,L^2(\{\varrho>0\}),
\end{equation}
by estimate \eqref{7-9}, it suffices to show that
\begin{equation}\label{10-2}
\varrho_\delta H(\vartheta_\delta) \to \varrho \overline{H(\vartheta)}\quad {\rm in\,\,}L^2(0,T; W^{-1,2}(\Omega)).
\end{equation}

First, we introduce the following lemma, which can be regarded as a variant of the celebrated Aubin-Lions lemma (see Lemma 6.3 in \cite{Feireisl Dynamics 2004}).
\begin{lemma}\label{7.1.}
Let $\{\mathbf{v}_n\}_{n=1}^\infty$ be a sequence of functions such that
\begin{equation*}
\mathbf{v}_n {\rm\,\,is \,\,bounded \,\,in}\,\,L^2(0,T; L^q(\Omega))\cap L^\infty(0,T; L^1(\Omega)), \quad{\rm with} \,\,q>6/5,
\end{equation*}
furthermore, assume that
\begin{equation*}
\partial_t{\mathbf{v}_n} \geq l_n \,\,{\rm in} \,\,\mathcal{D}'((0,T)\times\Omega),
\end{equation*}
where
\begin{equation*}
l_n {\rm\,\,is\,\,bounded \,\, in}\,\,L^1(0,T; W^{-m,r}(\Omega))
\end{equation*}
for a certain $m\geq1$, $r>1$.

Then $\{\mathbf{v}_n\}_{n=1}^\infty$ contains a subsequence such that
\begin{equation*}
\mathbf{v}_n\rightarrow \mathbf{v}\,\,{\rm in} \,\,L^2(0,T; H^{-1}(\Omega)).
\end{equation*}
\end{lemma}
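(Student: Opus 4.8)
\emph{Proof strategy.} The plan is to view this as an Aubin--Lions--Simon type statement, the only departure from the classical situation being that $\partial_t\mathbf{v}_n$ is controlled only from below. First I would record the spatial compactness that drives the argument: since $\Omega\subset\mathbb R^3$ is bounded and $q>6/5$, the conjugate exponent satisfies $q'<6$, so $H^1_0(\Omega)$ is compactly embedded in $L^{q'}(\Omega)$ and, dualizing, $L^q(\Omega)$ is compactly embedded in $H^{-1}(\Omega)$. Fixing an integer $m'$ large enough (e.g. $m'=m+2$), one also has the continuous embeddings $L^q(\Omega)\hookrightarrow H^{-1}(\Omega)\hookrightarrow W^{-m',2}(\Omega)$, $L^1(\Omega)\hookrightarrow W^{-m',2}(\Omega)$ and $W^{-m,r}(\Omega)\hookrightarrow W^{-m',2}(\Omega)$, the last two because $W^{m',2}_0(\Omega)\hookrightarrow L^\infty(\Omega)\cap W^{m,r'}(\Omega)$. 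Ehrling's lemma then provides, for every $\varepsilon>0$, a constant $C_\varepsilon$ with $\|w\|_{H^{-1}(\Omega)}\le\varepsilon\|w\|_{L^q(\Omega)}+C_\varepsilon\|w\|_{W^{-m',2}(\Omega)}$ for all $w\in L^q(\Omega)$.

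The crucial step is to turn the one-sided inequality $\partial_t\mathbf{v}_n\ge l_n$ into a genuine uniform bound on $\partial_t\mathbf{v}_n$. Set $\mu_n:=\partial_t\mathbf{v}_n-l_n$, a nonnegative distribution on $(0,T)\times\Omega$ and hence a nonnegative Radon measure. Testing $\mu_n$ against $\psi(t)\varphi(x)$ with $0\le\psi\le1$, $\psi\in C_c^\infty(0,T)$ and $0\le\varphi\in C_c^\infty(\Omega)$, one has $\langle\mu_n,\psi\varphi\rangle=-\int_0^T\psi'\langle\mathbf{v}_n,\varphi\rangle\,dt-\int_0^T\psi\langle l_n,\varphi\rangle\,dt$; bounding the first term by $2\|\mathbf{v}_n\|_{L^\infty(0,T;L^1)}\|\varphi\|_{L^\infty(\Omega)}$ and the second by $\|l_n\|_{L^1(0,T;W^{-m,r})}\|\varphi\|_{W^{m,r'}(\Omega)}$, and letting $\psi\nearrow\mathbf 1_{(0,T)}$ (monotone convergence for the nonnegative measure $\varphi\,d\mu_n$), one obtains $\int\varphi\,d\mu_n\le C\|\varphi\|_{W^{m',2}(\Omega)}$ with $C$ independent of $n$. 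Since $\Omega$ is bounded we may take $\varphi\equiv1$, which bounds the total mass of $\mu_n$ uniformly in $n$. Hence $\partial_t\mathbf{v}_n=l_n+\mu_n$ is bounded, uniformly in $n$, in the space $\mathcal M((0,T);W^{-m',2}(\Omega))$ of $W^{-m',2}(\Omega)$-valued measures.

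With this in hand I would conclude by a mollification-in-time argument (equivalently, by the variant of the Aubin--Lions--Simon lemma allowing a measure-valued time derivative). Let $\mathbf{v}_n^\tau$ be the regularization of $\mathbf{v}_n$ by a standard mollifier in $t$. The measure bound on $\partial_t\mathbf{v}_n$ together with $\mathbf{v}_n\in L^\infty(0,T;W^{-m',2}(\Omega))$ (from $\mathbf{v}_n\in L^\infty(0,T;L^1)$) gives the uniform estimate $\|\mathbf{v}_n-\mathbf{v}_n^\tau\|_{L^2(0,T;W^{-m',2}(\Omega))}\le C\sqrt\tau$, so with the $L^2(0,T;L^q)$ bound and Ehrling's inequality, $\|\mathbf{v}_n-\mathbf{v}_n^\tau\|_{L^2(0,T;H^{-1}(\Omega))}\le C\varepsilon+C_\varepsilon\sqrt\tau$ for every $\varepsilon>0$. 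On the other hand, for each fixed $\tau$ the family $\{\mathbf{v}_n^\tau\}_n$ is bounded in $L^\infty(0,T;L^q(\Omega))$ and equi-Lipschitz as maps $[0,T]\to W^{-m',2}(\Omega)$; the compact embedding of $L^q(\Omega)$ into $H^{-1}(\Omega)$ gives pointwise precompactness in $H^{-1}(\Omega)$, Ehrling's inequality promotes the time-equicontinuity from $W^{-m',2}$ to $H^{-1}$, and Arzel\`a--Ascoli yields a subsequence of $\{\mathbf{v}_n^\tau\}_n$ convergent in $C([0,T];H^{-1}(\Omega))$, hence in $L^2(0,T;H^{-1}(\Omega))$. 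A diagonal extraction as $\tau\to0$, combined with the uniform bound $\|\mathbf{v}_n-\mathbf{v}_n^\tau\|_{L^2(0,T;H^{-1})}\le C\varepsilon+C_\varepsilon\sqrt\tau$ and the arbitrariness of $\varepsilon$, produces a subsequence of $\{\mathbf{v}_n\}$ that is Cauchy, hence convergent, in $L^2(0,T;H^{-1}(\Omega))$, as claimed.

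I expect the main obstacle to be exactly the middle step: converting the distributional one-sided bound $\partial_t\mathbf{v}_n\ge l_n$ into the uniform bound for $\partial_t\mathbf{v}_n$ in $\mathcal M((0,T);W^{-m',2}(\Omega))$, where it is essential that $\Omega$ is bounded (so $\varphi\equiv1$ is admissible) and that $\mathbf{v}_n$ is bounded in $L^\infty(0,T;L^1(\Omega))$. Everything before it is elementary functional analysis, and everything after it is the standard, if somewhat technical, Aubin--Lions compactness routine.
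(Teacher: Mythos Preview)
The paper does not give its own proof of this lemma; it is quoted verbatim as Lemma~6.3 of Feireisl's monograph \emph{Dynamics of Viscous Compressible Fluids} and used as a black box. Your strategy---turning the one-sided bound $\partial_t\mathbf v_n\ge l_n$ into a uniform bound on $\partial_t\mathbf v_n$ via the nonnegative defect measure $\mu_n=\partial_t\mathbf v_n-l_n$, and then running an Aubin--Lions--Simon argument driven by the compact embedding $L^q(\Omega)\hookrightarrow H^{-1}(\Omega)$ for $q>6/5$---is correct and is essentially the standard route to this result.

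One technical wrinkle worth tightening: the step ``since $\Omega$ is bounded we may take $\varphi\equiv1$'' is not literally admissible, because your test functions $\varphi$ live in $C_c^\infty(\Omega)$ and any approximation $\varphi_k\nearrow1$ by such functions has $\|\varphi_k\|_{W^{m,r'}}\to\infty$. This does not break the argument, since what you actually need is not finiteness of the total mass of $\mu_n$ but only a uniform bound on $\partial_t\mathbf v_n$ in $\mathcal M\bigl((0,T);W^{-m',2}(\Omega)\bigr)$, and your estimate $\int\varphi\,d\mu_n\le C\|\varphi\|_{W^{m',2}}$ already delivers that directly. Alternatively---and this is closer to Feireisl's original presentation---one may bypass the measure interpretation entirely: for each fixed $\Phi\in C_c^\infty(\Omega)$ the scalar function $t\mapsto\langle\mathbf v_n(t),\Phi\rangle-\int_0^t\langle l_n(s),\Phi\rangle\,ds$ is nondecreasing and bounded, hence of uniformly bounded variation, and Helly's selection theorem plus a diagonal argument over a countable dense family of $\Phi$'s gives pointwise-in-time convergence in $W^{-m',2}(\Omega)$, after which Ehrling's lemma and the $L^2(0,T;L^q)$ bound finish the job exactly as you outline.
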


Now we apply Lemma \ref{7.1.} to the sequence $(\delta+\varrho_\delta) H(\vartheta_\delta)$. By the temperature inequality \eqref{6-110}, to obtain \eqref{10-2}, it is enough to prove that
\begin{equation*}
\|\vartheta_\delta\|_{L^3((0,T)\times\Omega)}\leq C,
\end{equation*}
which by \eqref{7-13} can be achieved provided that
\begin{equation}\label{7-15}
\vartheta_\delta\,\,{\rm is \,\,bounded \,\,in \,\,}L^{3}\left(\{\varrho_\delta(x,t)<\omega\}\right)
\end{equation}
by a positive constant independent of $\delta>0$, with $\omega$ being a sufficiently small positive number.

As proved in \cite{Feireisl Dynamics 2004, Feireisl On the motion 2004}, the estimate \eqref{7-15} can be obtained by choosing the function
\begin{equation*}
\varphi(t,x)=\psi(t)(\eta(t,x)-\underline{\eta}),\,\,0\leq\psi\leq1,\,\,\psi\in C_c^\infty(0,T),
\end{equation*}
where
\[\underline{\eta}=\inf_{t\in[0,T],x\in\Omega}\eta,\]
and for each $t\in[0,T]$, $\eta=\eta_\delta$ is the unique solution of the following Neumann problem
\begin{equation*}
\begin{cases}
\Delta\eta_\delta(t)=B(\varrho_\delta(t))-\frac{1}{|\Omega|}\int_\Omega B(\varrho_\delta(t))dx\,\,{\rm in}\,\,\Omega,\\
\
\nabla\varrho_\delta\cdot\mathbf{n}=0\,\,{\rm on}\,\,\partial\Omega,\\
\
\int_\Omega\eta_\delta(t)dx=0,
\end{cases}
\end{equation*}
with $B\in C^\infty(\mathbb{R})$ non-increasing satisfying
\begin{equation*}
B(z)=
\begin{cases}
0, & {\rm if}\,\, z\leq \omega,\\
-1, & {\rm if}\,\, z\geq 2\omega,
\end{cases}
\end{equation*}
as a test function of the renormalized temperature inequality \eqref{6-110}.

\subsection{\bf Strong convergence of the density $\varrho_\delta$}\

In this subsection, our main goal is to prove
\begin{equation}\label{7-18}
\varrho_\delta\to \varrho \quad{\rm in}\,\,L^1((0,T)\times\Omega).
\end{equation}
As in \cite{Feireisl On the motion 2004}, this can be achieved by taking the quantity
\begin{equation*}
\varphi(t,x)=\psi(t)\eta(x)\Delta^{-1}\partial_{x_i}[T_k(\varrho_\delta)]
\end{equation*}
with $\psi\in C_c^{\infty}(0,T)$, $\eta \in C_c^\infty(\Omega)$, and $T_k(\varrho)$ being cut-off functions
\begin{equation*}
T_k(\varrho)=\min\{\varrho,k\},\quad k\geq1,
\end{equation*}
as a test function of the momentum equation \eqref{6-1000}.

\subsection{\bf Passing the limit}\

Taking into account \eqref{7-11},\eqref{10-1} and \eqref{7-18}, we can pass the limit $\delta\to 0$ for the approximate solutions $(\varrho_\delta, \mathbf{u}_\delta, \vartheta_\delta)$ constructed in Proposition \ref{4.1.}, thus Theorem \ref{1.2.} is proved.

\bigskip
\noindent{\bf Acknowledgments:}
 Guodong Wang was supported by National Natural Science Foundation of China (12001135 and 12071098) and
China Postdoctoral Science Foundation (2019M661261).

\end{document}